\newtheorem{Theorem}{\bf Theorem}[section]
\newtheorem{Lemma}{\bf Lemma}[section]
\newtheorem{Proposition}{\bf Proposition}[section]
\newtheorem{Corollary}{\bf Corollary}[section]
\newtheorem{Remark}{\bf Remark}[section]
\newtheorem{Example}{\bf Example}[section]
\newtheorem{Definition}{\bf Definition}[section]
\newenvironment{theorem}{\begin{Theorem}$\!\!\!$}{\end{Theorem}}
\newenvironment{lemma}{\begin{Lemma}$\!\!\!$}{\end{Lemma}}
\newenvironment{proposition}{\begin{Proposition}$\!\!\!$}{\end{Proposition}}
\newenvironment{remark}{\begin{Remark}$\!\!\!$}{\end{Remark}}
\newenvironment{definition}{\begin{Definition}$\!\!\!$}{\end{Definition}}
\numberwithin{equation}{section}
\def\XXint#1#2#3{{\setbox0=\hbox{$#1{#2#3}{\int}$}
		\vcenter{\hbox{$#2#3$}}\kern-.5\wd0}}
\begin{document}
	
	\title[time-fractional semilinear heat equation]
	{On solvability of a time-fractional semilinear heat equation, and its quantitative approach to the classical counterpart}
	
	%    Remove any unused author tags.
	
	%    author one information
	\author{Kotaro Hisa}
	\address[Kotaro Hisa]{Graduate School of Mathematical Sciences, The University of Tokyo, 3-8-1 Komaba,
Meguro-ku, Tokyo 153-8914, Japan.}
	%\curraddr{}
	\email{hisak@ms.u-tokyo.ac.jp}
	%\thanks{}

	\author{Mizuki Kojima}
	\address[Mizuki Kojima]{Department of Mathematics, Tokyo Institute of Technology,
		2-12-1 Ookayama, Meguro-ku, Tokyo 152-8551, Japan. }
	%\curraddr{}
	\email{kojima.m.aq@m.titech.ac.jp}
	%\thanks{$^\ast$Corresponding author: Mizuki Kojima.}
	\thanks{The second author was supported by JST, the establishment of university fellowships towards the creation of science technology innovation, Grant Number JPMJFS2112.}

	\subjclass[2020]{Primary 35R11, Secondary 35K15}
	
	\keywords{Fractional differential equation, Semilinear heat equation, Blow-up, Global existence, Life span estimate.}
	
	%\date{}
	
	%\dedicatory{}
	
	%%RUNNING HEAD
	%\pagestyle{myheadings}
	%\markboth{M. Kojima and K. Hisa}{nonnegative solutions of a time-fractional semilinear heat equation}
	
	\begin{abstract}
		We are concerned with the following time-fractional semilinear parabolic problem in the $N$-dimensional whole space ${\bf R}^N$ with $N \geq 1$,
		\[
		{\rm (P)}_\alpha \qquad 
          {}^{C}\partial_t^\alpha u -\Delta u = u^p,\quad  t>0,\,\,\, x\in{\bf R}^N, \qquad
		    u(0) = \mu  \quad \mbox{in}\quad {\bf R}^N,
		\]
		where ${}^{C}\partial_t^\alpha$ denotes the Caputo derivative of order $\alpha \in (0,1)$, $p>1$, and $\mu$ is a nonnegative Radon measure on ${\bf R}^N$.  The case of $\alpha=1$ formally gives the Fujita-type equation (P)$_1$ \ $\partial_tu-\Delta u=u^p$. In particular, we mainly focus on the Fujita critical case $p=p_F:=1+2/N$. It is well known that the Fujita exponent $p_F$ separates the ranges of $p$ for the global-in-time solvability of (P)$_1$. In particular, (P)$_1$ with $p=p_F$ possesses no global-in-time solutions, and is not locally-in-time solvable in its scale critical space $L^1(\mathbf{R}^N)$. It is also known that the exponent $p_F$ plays the same role for the global-in-time solvability for (P)$_\alpha$. However, the problem (P)$_\alpha$ with $p=p_F$ is globally-in-time solvable, and exhibits local-in-time solvability in its scale critical space $L^1(\mathbf{R}^N)$. The purpose of this paper is to clarify the collapse of the global-in-time solvability and the local-in-time solvability of (P)$_\alpha$ as $\alpha$ approaches $1-0$.
	\end{abstract}
%%%%%%%	
	\maketitle
	%%%%%%%%%%%%%%%%%%%%%%%%%%%%%%%%%%%%%
	%%%%%%%%%%%%%%%%%%%%%%%%%%%%%%%%%%%%%
	\section{Introduction}
	%%%%%%%%%%%%%%%%%%%%%%%%%%%%%%%%%%%%%
	%%%%%%%%%%%%%%%%%%%%%%%%%%%%%%%%%%%%%
	%%%%%%%%%%%%%%%%%%%%%%%%%%%%%%%%%%%%%
	\subsection{Introduction.}
	%%%%%%%%%%%%%%%%%%%%%%%%%%%%%%%%%%%%%
	Let us consider nonnegative solutions of the following time-fractional semilinear parabolic problem in the $N$-dimensional whole space ${\bf R}^N$ with $N \geq 1$,
	\begin{equation}
	\label{TFF}
	\left\{
	\begin{split}
	{}^{C}\partial_t^\alpha u -\Delta u &= u^p\ &&t>0, \,\,\, x\in {\bf R}^N,\\
	u(0) &= \mu\  &&\mbox{in}\ \ {\bf R}^N,\\
	\end{split}
	\right.
	\end{equation}
	where $p>1$  and
	$\mu$ is a nonnegative Radon measure on ${\bf R}^N$. 
	Here, ${}^{C}\partial_t^\alpha$ is the Caputo derivative of order $\alpha\in (0,1)$, that is
	\[
		({}^{C}\partial_t^\alpha f)(t) := \frac{1}{\Gamma(1-\alpha)} \frac{d}{dt} \int_0^t \frac{f(s)-f(0)}{(t-s)^{\alpha}}\,ds,
	\] 
	where $\Gamma$ is the gamma function. This mathematical model 
      has been proposed to interpret anomalous diffusion, which is different from the conventional one based on the Brownian motion. For the mathematical treatment of time-fractional problems, see, e.g., \cite{Podl99, GalWar20, LuchYama19}.
	
	The case of $\alpha=1$ corresponds to the Fujita-type problem
	\begin{equation}
	\label{Fjt}
	\partial_t  u -\Delta u = u^p,\quad  t>0,\,\,\, x\in{\bf R}^N, \qquad
	u(0) = \mu  \quad \mbox{in}\quad {\bf R}^N,
	\end{equation}
	which has been studied extensively by many mathematicians since the pioneering work of Fujita \cite{Fujita66}. (See, e.g., \cite{QuitSoup19}, which includes an extensive list of references for problem \eqref{Fjt}.) In what follows, denote the Fujita exponent by $p_F:=1+2/N$. For the global-in-time solvability of problem \eqref{Fjt}, it is known that the condition $1<p\le p_F$ implies the nonexistence of any positive global-in-time solutions, while $p>p_F$ guarantees the global-in-time solvability of problem \eqref{Fjt} for appropriate initial data.  We emphasize that in the Fujita critical case, i.e., when $p=p_F$, the global-in-time solvability fails. See \cite{Fujita66, Sugitani75, Hayakawa73, KoSiTa77}. Furthermore, the local-in-time solvability of problem \eqref{Fjt} in its scale critical space $L^{{N(p-1)/2}}(\mathbf{R}^N)$ is guaranteed, provided $N(p-1)/2>1$. However, as for the doubly critical case where $p=p_F$ and $\mu\in L^1({\bf R}^N)$ (i.e., the simultaneous occurrence of the Fujita critical case and the scale critical case), it is known that problem \eqref{Fjt} may not have any positive local-in-time solutions for certain singular initial data. See e.g. \cite{BreCaz96, Miya21, BarasPierre85,HisaIshige18}. Roughly speaking, problem \eqref{Fjt} with $p=p_F$ exhibits the global-in-time unsolvability and local-in-time unsolvability.
	
	On the other hand, it is shown in Zhang and Sun \cite{ZhanSun15} that also for problem \eqref{TFF}, the common exponent $p_F$ becomes the critical one which separates the range of $p$ for the nonexistence and the existence of global-in-time solutions, that is, problem \eqref{TFF} is not globally-in-time solvable if $1<p<p_F$, 
    while problem \eqref{TFF} possesses a global-in-time solution if $p\ge p_F$. Note that a global-in-time solution can exist even if $ p=p_F$. Moreover, for the local-in-time solvability, \cite{GMS22, ZLS19} showed that problem \eqref{TFF} is locally-in-time solvable for the doubly critical case, i.e., $p=p_F$ and $\mu \in L^1({\bf R}^N)$. For the time-fractional semilinear heat equation, see e.g. \cite{GMS22, GalWar20, ZhanSun15, ZLS19, OkaZhan23}. Moreover, for the space-time fractional problems, see \cite{CarFerNoe24} and references therein.
  
    From the above, we are led to face the critical transition in the nature of problem \eqref{TFF} as $\alpha$ approaches $1-0$. In this paper, we shall call these differences, the \textit{collapse} of the solvability of \eqref{TFF} when $\alpha\to 1-0$.

	\begin{table}[h]\label{tab. comparison}
		\caption{Comparison of the two problems with $p=p_F$.}
		\centering
		\begin{tabular}{ccc}
			\hline
			Problem & Global solution & Local solution for $\mu\in L^{1}({\bf R}^N)$ \rule[0mm]{0mm}{5mm}\\
			\hline \hline
			\eqref{Fjt} with $p=p_F$ &not exist& 
                no sol. for some $\mu\in L^1(\mathbf{R}^N)$
            \rule[0mm]{0mm}{5mm}\\
			\eqref{TFF} with $p=p_F$ &exists& exists for all $ \mu\in L^1(\mathbf{R}^N)$
            \rule[0mm]{0mm}{5mm}\\  
            \hline
		\end{tabular}
	\end{table}
	
	Nevertheless, since the Caputo derivative is believed to be a naturally generalized notion of the classical one, it would be reasonable to expect that problems \eqref{TFF} and \eqref{Fjt} should be correlated in a consistent way when $\alpha\to 1-0$. 

    Hence, our main concern here is stated as follows.
	\vskip\baselineskip
	
	\noindent\textbf{Question.} \textit{ Can one understand in a rigorous way the mechanism of
      the collapse of the global-in-time solvability and the local-in-time solvability of problem \eqref{TFF} 
        when $\alpha$ approaches $1-0$?}

	\vskip\baselineskip
	
	To give an affirmative answer to this question, in the first part of this paper, 
      we give a necessary condition for the solvability of \eqref{TFF} which is an analogous version of the results of \cite{HisaIshige18} generalized to the time-fractional problem \eqref{TFF}. 
         Furthermore, sufficient conditions for the solvability of \eqref{TFF} are derived in a way that guarantees the optimality of necessary condition.
         These optimal conditions enable us to obtain sharp estimates of the life span of solutions of problem \eqref{TFF}, and through these estimates, we aim to investigate the collapse of the solvability, which is the final goal of this paper.
          For the exact definition of the life span, see Section 4 below.
	The second part of this paper is devoted to the application of these conditions to 
      the analysis of the collapse phenomena. 
	    Through this procedure, the occurrence mechanism of the collapse of 
	      the global-in-time solvability and the local-in-time solvability of problem \eqref{TFF} are clarified.

	%%%%%%%%%%%%%%%%%%%%%%%%%%%%%%%%%%%%%
	\subsection{Notation and the definition of solutions.}
	%%%%%%%%%%%%%%%%%%%%%%%%%%%%%%%%%%%%%
	
	In order to state our main results, we fix some notation and formulate the definition of solutions.       
        Let $B(z;r):=\{y\in{\bf R}^N; |x-y|<r\}$ for $z\in{\bf R}^N$ and $r>0$. 
           In what follows, for a given Banach space $X$, 
             we denote its norm by $\| \cdot | X\|$.    
        For $t>0$ and $x\in{\bf R}^N$, let $G$ be the fundamental solution of the heat equation in $(0,\infty)\times{\bf R}^N$, that is
	\[
	G(t,x) := \frac{1}{(4\pi t)^\frac{N}{2}} \exp\left(-\frac{|x|^2}{4t}\right).
	\]
	For $t>0$, $x\in{\bf R}^N$, and a Radon measure $\mu$ on ${\bf R}^N$, define
	\[
	e^{t\Delta} \mu(x) := \int_{{\bf R}^N} G(t, x-y) \,d\mu(y).
	\]
	We often identify a locally integrable function $\phi$ defined in ${\bf R}^N$ with the Radon measure $\phi\,dx$. Let us define
	\[
	P_{\alpha}(t)\mu := \int_0^\infty h_\alpha (\theta) e^{t^\alpha \theta \Delta}\mu \, d\theta, \quad S_{\alpha} (t)\mu := \int_0^\infty \theta h_\alpha (\theta ) e^{t^\alpha\theta \Delta} \mu \, d\theta,
	\]
	for $t>0$, $\alpha\in (0,1)$, and a Radon measure $\mu$ on $\textbf{R}^N$. Here, $h_\alpha$ is a certain probability density function on $(0,\infty)$ which satisfies $h_\alpha>0$. In addition, we have
	\begin{equation}
	\label{eq:1.4}
	\int_0^\infty \theta^{\delta} h_\alpha(\theta) \, d\theta = \frac{\Gamma(1+\delta)}{\Gamma(1+\alpha\delta)}<\infty \quad \mbox{for} \quad \delta>-1
	\end{equation}
	and
	\begin{equation}
	\label{eq:1.4.5}
	\int_0^\infty h_\alpha (\theta ) e^{z\theta} \,d\theta = E_{\alpha,1} (z), \quad 
	\int_0^\infty \alpha \theta h_\alpha (\theta ) e^{z\theta} \,d\theta = E_{\alpha,\alpha} (z), 
	\quad \mbox{for} \quad z\in{\bf  C}.
	\end{equation}
	Here, $E_{\alpha,\beta}$ is the Mittag--Leffler function, that is, for $\alpha,\beta \in{\bf  C}$ with
	${\rm Re}(\alpha)>0$ and $z\in{\bf  C}$,
	\[
	E_{\alpha,\beta}(z) := \sum_{k=0}^\infty \frac{z^k}{\Gamma(\alpha k+ \beta)}.
	\]
	For more details, see e.g. \cite{GMS22,GalWar20,Podl99,ZhanSun15,ZLS19,Mainardi1994} and references therein.
	For $a,b>0$, let $B(a,b)$ be the Beta function, that is
	\[
	B(a,b):= \int_0^1(1-s)^{a-1} s^{b-1}\,ds.
	\]

	Next, we formulate the definition of solutions of problem \eqref{TFF}. 
%%%%%%%%%%%%%%%%%%%%%%%%%%%%%%%%%%%%%%%%%%%%%%%%%%%%%%%%%%%%%%%%%%%%%%%%%%%%%%%%%%%%%%%%
%%%%%%%%   Definition of solution   %%%%%%%%%%%%%%%%%%%%%%%%%%%%%%%%%%%%%%%%%%%%%%%%%%%%
%%%%%%%%%%%%%%%%%%%%%%%%%%%%%%%%%%%%%%%%%%%%%%%%%%%%%%%%%%%%%%%%%%%%%%%%%%%%%%%%%%%%%%%%	
	\begin{definition}
		\label{Def:1.1}
		Let $T\in(0,\infty]$ and $u$ be a nonnegative measurable function in $(0,T)\times {\bf R}^N$.  For a given nonnegative Radon measure $\mu$ on ${\bf R}^N$, we say that $u$ is a solution 
		of problem \eqref{TFF} in $[0,T)\times{\bf R}^N$ if u satisfies
		\[
		\infty> u(t,x) = [P_\alpha(t)\mu] (x) +\alpha\int_0^t (t-s)^{\alpha-1} [S_\alpha(t-s)u(s)^p](x) \,ds
		\]
		for almost all $t\in(0,T)$ and $x\in{\bf R}^N$.
		If $u$ satisfies the above equality with $=$ replaced by $\ge$, then $u$ is said to be a supersolution of problem \eqref{TFF} in $[0,T)\times{\bf R}^N$.
	\end{definition}
	%
	%%%%%%%%%%%%%%%%%%%%%%%%%%%%%%%%%%%%%
	\subsection{Main results I (necessary and sufficient conditions)}
	%%%%%%%%%%%%%%%%%%%%%%%%%%%%%%%%%%%%%
	Now we are ready to state the main results of this paper. The first theorem states necessary conditions for the existence of solutions of problem \eqref{TFF}.
%%%%%%%%%%%%%%%%%%%%%%%%%%%%%%%%%%%%%%%%%%%%%%%%%%%%%%%%%%%%%%%%%%%%%%%
%%%%%    Theorem 1.1     %%%%%%%%%%%%%%%%%%%%%%%%%%%%%%%%%%%%%%%%%%%%%%
%%%%%%%%%%%%%%%%%%%%%%%%%%%%%%%%%%%%%%%%%%%%%%%%%%%%%%%%%%%%%%%%%%%%%%%
	\begin{theorem}
		\label{Thm:1.1}
		Suppose that problem \eqref{TFF}  possesses a solution in $[0,T)\times{\bf R}^N$, where $T\in (0,\infty)$.
		Then there exists a constant $\gamma_1=\gamma_1(N,p,\alpha)>0$ satisfying $\limsup_{\alpha\to1-0} \gamma_1(\alpha)\in(0,\infty)$  such that $\mu$ satisfies
		\begin{equation}
		\label{Thm:1.1.1}
		\sup_{z\in{\bf R}^N} \mu(B(z;\sigma)) \le \gamma_1 \sigma^{N-\frac{2}{p-1}}
		\end{equation}
		for all $\sigma \in (0,T^{\alpha/2})$.  In addition, if $p=p_F$, then there exists a constant $\gamma'_1=\gamma'_1(N,\alpha)>0$ satisfying $\limsup_{\alpha\to1-0} \gamma'_1(\alpha)\in(0,\infty)$  such that $\mu$ satisfies
		\begin{equation}
		\label{Thm:1.1.2}
		\sup_{z\in{\bf R}^N} \mu(B(z;\sigma)) \le \gamma'_1 \left(\int_{\sigma^{2/\alpha}/(16T)}^{1/4} t^{-\alpha} \, dt \right)^{-\frac{N}{2}}
		\end{equation}
		for all $\sigma \in (0,T^{\alpha/2})$. 
	\end{theorem}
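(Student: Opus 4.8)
The plan is to use the nonnegativity of $u$ together with the representation in Definition \ref{Def:1.1} to reduce everything to a self-improving lower bound, and the only analytic inputs beyond the definition will be pointwise lower bounds for $P_\alpha$ and $S_\alpha$. Writing $e^{\tau\Delta}$ through $G$ and using $G(\tau,\xi)\ge c_N\tau^{-N/2}$ for $|\xi|\le\tau^{1/2}$, I would restrict the $\theta$-integrals defining $P_\alpha$ and $S_\alpha$ to a fixed compact interval $[\theta_0,\theta_1]\subset(0,\infty)$ on which $h_\alpha$ admits a positive lower bound. Because $h_\alpha$ is a probability density that, by \eqref{eq:1.4}--\eqref{eq:1.4.5}, concentrates at $\theta=1$ as $\alpha\to1-0$, this lower bound can be taken uniform for $\alpha$ near $1$; this uniformity is exactly what will force $\limsup_{\alpha\to1-0}\gamma_1(\alpha)<\infty$. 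The outcome is, for nonnegative $\mu$ and $\phi$, the estimates $[P_\alpha(t)\mu](x)\ge c\,t^{-\alpha N/2}\mu(B(x,t^{\alpha/2}))$ and $[S_\alpha(\tau)\phi](x)\ge c\,\tau^{-\alpha N/2}\int_{B(x,\tau^{\alpha/2})}\phi\,dy$, with $c=c(N,\alpha)$ bounded away from $0$ as $\alpha\to1-0$.

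Fixing $z$ and $\sigma\in(0,T^{\alpha/2}]$ and setting $s_0:=\sigma^{2/\alpha}\le T$, Definition \ref{Def:1.1} and $u\ge0$ give both $u(s)\ge P_\alpha(s)\mu$ and $u(t,x)\ge\alpha\int_0^t(t-s)^{\alpha-1}[S_\alpha(t-s)u(s)^p](x)\,ds$. Combining the first with the $P_\alpha$-bound yields $u(s,y)\ge c\,\sigma^{-N}\mu(B(z,\sigma))$ for $y\in B(z,\sigma)$ and $s$ in a window comparable to $s_0$ (those $s$ for which $B(y,s^{\alpha/2})$ contains $B(z,\sigma)$). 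Inserting this into the nonlinear inequality at $x=z$, restricting the $s$-integral and the $S_\alpha$-integration to the region where both Gaussians are of maximal order, and computing the resulting time integral $\int(t-s)^{\alpha-1}s^{-\alpha Np/2}\,ds$, I expect the single-pass bound $u(t,z)\gtrsim\sigma^{2-Np}\mu(B(z,\sigma))^p$ at a time $t\simeq s_0$, using $s_0^{\alpha(1-Np/2)}=\sigma^{2-Np}$.

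A single evaluation only bounds $\mu$ in terms of the finite but unknown value $u(t,z)$, so the plan is to feed the improved bound back into the nonlinear inequality and iterate: writing $A_k$ for the lower bound on $u$ available on a ball around $z$ over a dyadic time window, the same computation produces a recursion $A_{k+1}\ge c\,A_k^{\,p}\rho_k$, where $\rho_k$ gathers the explicit scale factors from the two Gaussians and the Caputo weight $(t-s)^{\alpha-1}$. Morally this is the statement that, after applying $e^{\sigma^2\Delta}$ to the integral equation and using Jensen's inequality---valid with the explicit constant $\Gamma(1+\alpha)^{p-1}$, since by \eqref{eq:1.4} the operator $S_\alpha$ is $\Gamma(1+\alpha)^{-1}$ times a Markov operator---a suitable localized average $W$ of $u$ obeys a scalar fractional integral inequality $W(t)\ge c\int_0^t(t-s)^{\alpha-1}W(s)^p\,ds$ with linear datum of size $\simeq\sigma^{-N}\mu(B(z,\sigma))$, at least while the time window is short enough that $S_\alpha(t-s)$ keeps its mass in $B(z,\sigma)$. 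Such an inequality has no finite solution once its datum exceeds the blow-up threshold of the fractional ODE $\partial_t^\alpha W\simeq W^p$; since $u(t,z)<\infty$, the datum must stay below threshold, which is \eqref{Thm:1.1.1}. For $1<p<p_F$ this is cleanest via scaling: the quantity $\sigma^{2/(p-1)-N}\mu(B(z,\sigma))$ is invariant under $u\mapsto\lambda^{2/(p-1)}u(\lambda^{2/\alpha}\,\cdot,\lambda\,\cdot)$, so after normalizing the scale the threshold is a single constant $\gamma_1(N,p,\alpha)$.

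The critical case $p=p_F$, where $N-2/(p-1)=0$ and \eqref{Thm:1.1.2} departs from a pure power, is where the real work lies. Here the recursion $A_{k+1}\ge cA_k^p\rho_k$ is scale-invariant and every dyadic time scale between $s_0=\sigma^{2/\alpha}$ and $T$ contributes comparably, so the decisive gain is not one factor but the accumulation over all such scales. The hard part will be to track this accumulation of the borderline Caputo time integral and to show that, after rescaling time by $T$ so that the scales run over $(\sigma^{2/\alpha}/(16T),1/4)$, the product of the $\rho_k$ turns into $\int_{\sigma^{2/\alpha}/(16T)}^{1/4}t^{-\alpha}\,dt$, the exponent $N/2$ being the one carried by the Gaussian normalization in the threshold. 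The twin obstacles are therefore keeping every constant uniform as $\alpha\to1-0$---so the $\gamma$'s have finite $\limsup$ and, consistently, $\int t^{-\alpha}\,dt$ degenerates to $\int t^{-1}\,dt=\log$, recovering the classical logarithmic estimate of \cite{HisaIshige18}---and performing the critical-scale summation cleanly while respecting the endpoint $\sigma=T^{\alpha/2}$, where the usable time window collapses.
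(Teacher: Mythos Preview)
Your proposal circles around two different strategies---a Baras--Pierre style iteration $A_{k+1}\ge cA_k^{\,p}\rho_k$ over dyadic time scales, and the Gaussian-averaging/Jensen reduction you mention as ``morally'' equivalent---without committing to either. The paper goes the second route, and it is considerably cleaner. Concretely, the paper fixes $\overline z$ and works with the single scalar quantity
\[
U(t)=\int_{{\bf R}^N} G(t^\alpha,x)\,u(t,x+\overline z)\,dx,\qquad V(t)=t^{N\alpha/2}U(t),
\]
multiplies the integral equation by $G(t^\alpha,\cdot)$, uses the semigroup property of $G$ and Jensen once, and arrives at the closed inequality
\[
V(t)\ \ge\ C_1 r_1(\alpha)\,\mu(B(\overline z,\rho))\;+\;C_2 r_2(\alpha)\,T^{\alpha-1}\int_{\rho^{2/\alpha}}^{t} s^{-\frac{N\alpha}{2}(p-1)}\,V(s)^p\,ds.
\]
This is then compared to the explicit ODE $\zeta'=C\,T^{\alpha-1}t^{-N\alpha(p-1)/2}\zeta^p$, whose blow-up criterion on $[\rho^{2/\alpha},T/4]$ \emph{is} the integral $\int_{\rho^{2/\alpha}}^{T/4} t^{-N\alpha(p-1)/2}\,dt$. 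After the change of variable $t\mapsto t/T$ this becomes exactly $\int_{\sigma^{2/\alpha}/(16T)}^{1/4}t^{-\alpha}\,dt$ when $p=p_F$, and specializing $T=\sigma^{2/\alpha}$ gives \eqref{Thm:1.1.1}. No iteration, no dyadic summation, and the subcritical and critical cases are handled by the same computation.

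By contrast, your iterative scheme would require tracking the constants $c_k,\rho_k$ through a double-exponential recursion and then arguing that the resulting blow-up threshold reproduces the specific integral $\bigl(\int t^{-\alpha}\,dt\bigr)^{-N/2}$; you assert this but give no mechanism, and in practice such iterations tend to produce the correct power or logarithm only up to unspecified multiplicative constants, not the clean integral form of \eqref{Thm:1.1.2}. Also, your device of restricting $\theta$ to a compact interval where $h_\alpha$ is bounded below---to get uniformity as $\alpha\to1-0$---is unnecessarily delicate; the paper simply bounds $(1+\theta)^{-N/2}\ge C e^{-\theta/2}$ and invokes \eqref{eq:1.4.5} to obtain explicit constants $r_1(\alpha)=E_{\alpha,1}(-1/2)$ and $r_2(\alpha)=\alpha E_{\alpha,\alpha}(-1/2)$, whose behavior as $\alpha\to1-0$ is immediate from the series definition.
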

%%%%%%%%%%%%%%%%%%%%%%%%%%%%%%%%%%%%%%%%%%%%%%%%%%%%%%%%%%%%%%%%%%%%%%%%%%%
%%%%%%%%%%    Remark 1.1      %%%%%%%%%%%%%%%%%%%%%%%%%%%%%%%%%%%%%%%%%%%%%
%%%%%%%%%%%%%%%%%%%%%%%%%%%%%%%%%%%%%%%%%%%%%%%%%%%%%%%%%%%%%%%%%%%%%%%%%%%
			We compare Theorem~\ref{Thm:1.1} with the related studies. For the solvability of problem \eqref{Fjt}, Baras and Pierre \cite{BarasPierre85} provided necessary conditions for the existence of nonnegative solutions. Subsequently, the optimality of these conditions are proved by the first author of this paper and Ishige \cite{HisaIshige18}:
			If problem \eqref{Fjt} possesses a nonnegative solution in $[0, T)\times{\bf R}^N$ for some $T>0$, then the following hold.
			\begin{enumerate}
				\item[(i)] \ There exists $C_1=C_1(N,p)>0$ such that
				\begin{equation}\label{eq. HisaIshige}
				\sup_{z\in{\bf R}^N} \mu(B(z;\sigma)) \le C_1 \sigma^{N-\frac{2}{p-1}},\quad 0<\sigma<T^\frac{1}{2}.
				\end{equation}
%%%%%%%				
				\item[(ii)] \  For the case of $p=p_F$, there exists $C_2=C_2(N)>0$ such that
				\begin{equation}\label{eq. HisaIshige Pf}
				\sup_{z\in{\bf R}^N} \mu(B(z;\sigma)) \le C_2 \left[\log\left(e+\frac{T^\frac{1}{2}}{\sigma}\right)\right]^{-\frac{N}{2}},\quad 0<\sigma<T^\frac{1}{2}.
				\end{equation}
			\end{enumerate}
			For studies on necessary conditions for the solvability of Fujita type problems, see also \cite{IshiKawaOka2020, FHIL23, HIT18, LaiSie21, Takahashi16}.

			Obviously \eqref{Thm:1.1.1} is a generalization of \eqref{eq. HisaIshige}. 
    Moreover, taking the limit $\alpha\to1-0$ in \eqref{Thm:1.1.2}, we get
			\[
			\sup_{z\in{\bf R}^N} \mu(B(z;\sigma)) \le C\left(\log\frac{4T}{\sigma^2}\right)^{-\frac{N}{2}} \le C\left[\log\left(e+\frac{T^{\frac{1}{2}}}{\sigma}\right)\right]^{-\frac{N}{2}}
			\]
			for all $\sigma \in (0,T^{1/2})$, which is the counterpart of \eqref{eq. HisaIshige Pf}. However, \eqref{Thm:1.1.2} gives not only an analogy to \eqref{eq. HisaIshige Pf} but also some information for the case of $\alpha \in (0,1)$, which differs from the case of $\alpha =1$. Namely, \eqref{Thm:1.1.2} does not exclude the case of $T=\infty$ for suitable nontrivial initial data. In contrast, \eqref{eq. HisaIshige Pf} with $T=\infty$ implies $\mu \equiv 0$.

%%%%%%%%%%%%%%%%%%%%%%%%%%%%%%%%%%%%%%%%%%%%%%%%%%%%%%%%%%%%%%%%%%%%%%%%%%%%%%%%%%%%%%%%%%%%
%%%%%%%%   Remark 1.2    %%%%%%%%%%%%%%%%%%%%%%%%%%%%%%%%%%%%%%%%%%%%%%%%%%%%%%%%%%%%%%%%%%%
%%%%%%%%%%%%%%%%%%%%%%%%%%%%%%%%%%%%%%%%%%%%%%%%%%%%%%%%%%%%%%%%%%%%%%%%%%%%%%%%%%%%%%%%%%%%	
	\begin{remark}
		{\rm 
		As mentioned above, Zhang and Sun \cite{ZhanSun15} showed that, if $1<p<p_F$ and $\mu \not \equiv0$ in ${\bf R}^N$, then problem \eqref{TFF} possesses no global-in-time solutions. We observe that Theorem~\ref{Thm:1.1} leads to the same conclusion as \cite{ZhanSun15}, since $\sigma^{N-2/(p-1)} \to 0$ as $\sigma\to\infty$.
		}
	\end{remark}
%%%%%%%%%%%%%%%%%%
	Next, we give sufficient conditions for the solvability of problem \eqref{TFF}.
%%%%%%%%%%%%%%%%%%%%%%%%%%%%%%%%%%%%%%%%%%%%%%%%%%%%%%%%%%%%%%%%%%%%%%%%%%%%%%%%%%
%%%%%%  Theorem 1.2    %%%%%%%%%%%%%%%%%%%%%%%%%%%%%%%%%%%%%%%%%%%%%%%%%%%%%%%%%%%
%%%%%%%%%%%%%%%%%%%%%%%%%%%%%%%%%%%%%%%%%%%%%%%%%%%%%%%%%%%%%%%%%%%%%%%%%%%%%%%%%%
	\begin{theorem}
		\label{Thm:1.2}
		Let $1<p\le p_F$ and $T>0$. Then there exists $\gamma_2=\gamma_2(N,p,\alpha)>0$ such that, 
          if $\mu$ is a nonnegative Radon measure on ${\bf R}^N$ satisfying
		\begin{equation}\label{Thm:1.2.1}
		\sup_{z\in{\bf R}^N} \mu(B(z;T^\frac{\alpha}{2})) \le \gamma_2  T^{\alpha(\frac{N}{2}-\frac{1}{p-1})},
		\end{equation}
		then problem \eqref{TFF} possesses a solution in $[0,T)\times{\bf R}^N$. Furthermore, the following hold.
    \begin{enumerate}
      \item[{\rm (i)}] \ If $1<p<p_F$, then it follows that
       \begin{equation}\label{est:gamma2:sub}
          \limsup_{\alpha\to1-0} \ \gamma_2(N,p,\alpha)\in (0,\infty).
       \end{equation}
%%%%%%%%%%%%%%%%%
      \item[{\rm (ii)}] \
		If $p=p_F$, then one can take $\gamma'_2(\alpha)$ such that
		\begin{equation}\label{est:gamma2:critical}
		\gamma_2(N, p_F, \alpha) \le  \gamma'_2(\alpha) (1-\alpha)^{\frac{N}{2}}
		\end{equation}
		and $\limsup_{\alpha\to1-0}\gamma'_2(\alpha)\in (0,\infty)$.
	\end{enumerate}

	\end{theorem}
%%%%%%%%%%%%%%%%%%%%%%%%%%%%%%%%%%%%%%%%%%%%%%%%%%%%%%%%%%%%%%%%%%%%%%%%%%%%%%%%%%%
%%%%%%   Theorem 1.3     %%%%%%%%%%%%%%%%%%%%%%%%%%%%%%%%%%%%%%%%%%%%%%%%%%%%%%%%%%
%%%%%%%%%%%%%%%%%%%%%%%%%%%%%%%%%%%%%%%%%%%%%%%%%%%%%%%%%%%%%%%%%%%%%%%%%%%%%%%%%%%	
	\begin{theorem}
		\label{Thm:1.3}
		Suppose that $p>p_F$ and $T>0$. Let $r>1$. Then there exists $\gamma_3=\gamma_3(N,p, r, \alpha)>0$ such that, if $\mu$ is a nonnegative measurable function in ${\bf R}^N$ satisfying
		\begin{equation}
		\label{Thm:1.3.1}
		\sup_{z\in{\bf R}^N}\left(\int_{B(z;\sigma)} \mu(y)^r \, dy\right)^\frac{1}{r}\le \gamma_3 \sigma^{\frac{N}{r}-\frac{2}{p-1}}
		\end{equation}
		for all $\sigma \in (0,T^{\alpha/2})$, then problem \eqref{TFF} possesses a solution in $[0,T)\times{\bf R}^N$. Furthermore, we have 
   \begin{equation*}
       \limsup_{\alpha\to1-0}  \  \gamma_3(N, p, r, \alpha)\in (0,\infty).
    \end{equation*}

	\end{theorem}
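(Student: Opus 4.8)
The plan is to first remove $T$ by the parabolic scaling of \eqref{TFF}. If $u$ solves \eqref{TFF} on $[0,T]\times{\bf R}^N$, then $u_\lambda(t,x):=\lambda^{2/(p-1)}u(\lambda^{2/\alpha}t,\lambda x)$ solves it on $[0,\lambda^{-2/\alpha}T]\times{\bf R}^N$ with data $\lambda^{2/(p-1)}\mu(\lambda\,\cdot)$, and the quantity on the left of \eqref{Thm:1.3.1} divided by $\sigma^{N/r-2/(p-1)}$ is invariant under this scaling. Choosing $\lambda=T^{\alpha/2}$ reduces everything to $T=1$ and makes the threshold $\gamma_3$ produced below independent of $T$. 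From now on $T=1$, and I write $\rho_t:=t^{\alpha/2}$ for the parabolic length at time $t$.

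Because the kernels $G$ and $h_\alpha$ are positive and $a\mapsto a^p$ is nondecreasing on $[0,\infty)$, the Picard sequence $u_0:=P_\alpha(\cdot)\mu$, $u_{n+1}:=P_\alpha(\cdot)\mu+\alpha\int_0^\cdot(\cdot-s)^{\alpha-1}S_\alpha(\cdot-s)u_n(s)^p\,ds$ is pointwise nondecreasing in $n$; hence it suffices to produce a finite-a.e. supersolution $\overline u\ge u_0$, after which $u_n\uparrow u\le\overline u$ and monotone convergence yields a solution in the sense of Definition \ref{Def:1.1}. The natural $\overline u$ is, up to a constant, the linear evolution of a Morrey majorant of $\mu$, the self-similar form being dictated by the scaling above. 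When the data is not too concentrated it is cleaner to realize this as a contraction in the scale-invariant space $X$ with norm $\|u\|_X:=\sup_{0<t\le1}\sup_{z}t^{\alpha/(p-1)}\big(\dashint_{B(z,\rho_t)}u(t,y)^q\,dy\big)^{1/q}$, where the exponent is taken in $\max(r_c,p)<q<p\,r_c$, $r_c:=N(p-1)/2$. This interval is nonempty precisely because $p>p_F$ (indeed $p<p\,r_c\Leftrightarrow r_c>1\Leftrightarrow p>p_F$), which is exactly where the exponent $p_F$ enters; the weight $t^{\alpha/(p-1)}$ and the radius $\rho_t$ are forced by requiring $\|\cdot\|_X$ to be scaling-invariant.

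The two estimates that drive either scheme are smoothing bounds for $P_\alpha$ and $S_\alpha$ measured at the matched scale $\rho_t$. Starting from $\|e^{s\Delta}f\|_{L^q(B(z,\rho))}\lesssim s^{-\frac N2(\frac1r-\frac1q)}\|f\|_{L^r(B(z,2\rho))}$ together with the Gaussian tail bound for the far field, a dyadic decomposition of ${\bf R}^N$ into annuli of radius $2^j\sqrt s$ and the hypothesis \eqref{Thm:1.3.1} control $\big(\dashint_{B(z,\rho_t)}|e^{s\Delta}\mu|^q\big)^{1/q}$ by $\gamma_3\,t^{-\alpha/(p-1)}$ times an explicit power of $\theta$ when $s=t^\alpha\theta$. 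Integrating against $h_\alpha(\theta)\,d\theta$ and $\theta h_\alpha(\theta)\,d\theta$ and invoking the moment identity \eqref{eq:1.4} converts these powers of $\theta$ into the Gamma ratios $\Gamma(1-\delta)/\Gamma(1-\alpha\delta)$, giving $\|u_0\|_X\le C_1\gamma_3$ for the linear part and, for the nonlinear part, $\big\|\alpha\int_0^\cdot(\cdot-s)^{\alpha-1}S_\alpha(\cdot-s)(u^p-v^p)(s)\,ds\big\|_X\le C_2(\|u\|_X+\|v\|_X)^{p-1}\|u-v\|_X$. Here the $s$-integration produces a Beta function $B(\alpha(1-\mu_0),1-\alpha\nu p)$ with $\mu_0=\frac{N(p-1)}{2q}$ and $\nu=\frac1{p-1}-\frac N{2q}$; the conditions $\mu_0<1$ and $\nu p<1$ needed for convergence are exactly $q\in(r_c,p\,r_c)$, again pinned to $p>p_F$. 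A fixed point in a small ball of $X$ (radius $M$ with $C_2(2M)^{p-1}<1$ and $C_1\gamma_3\le M/2$) then defines $\gamma_3(N,p,r,\alpha)$, and positivity identifies the fixed point with the monotone limit, hence a nonnegative solution.

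The main difficulty is the linear smoothing estimate for strongly concentrated data. The worst admissible $\mu$ under \eqref{Thm:1.3.1} is supported on a set of fractional codimension $2r/(p-1)$, on which $e^{s\Delta}\mu$ can blow up like $s^{-1/(p-1)}$; when $p<2$ and $r$ is small this pushes $P_\alpha(\cdot)\mu$ out of $L^q_{\mathrm{loc}}$ for every admissible $q$, so the $L^q$-contraction closes directly only when $p\ge2$ (or $r$ not too small, namely $r>\tfrac12 N(p-1)(2-p)$). In the remaining range the argument must be run pointwise: one applies the linear flow to a regularized Morrey majorant of $\mu$ and proves the comparison estimate $\alpha\int_0^t(t-s)^{\alpha-1}S_\alpha(t-s)\overline u(s)^p\,ds\le\tfrac12\overline u(t)$ a.e., the constant being $O(\gamma_3^{p-1})$ by scale invariance. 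Carrying out this comparison while correctly handling the concentration, and with constants uniform as $\alpha\to1-0$, is the technical heart. Finally, $\limsup_{\alpha\to1-0}\gamma_3\in(0,\infty)$ holds because the only $\alpha$-dependence of $C_1,C_2$ is through the Gamma ratios from \eqref{eq:1.4} and the Beta factor from the $s$-integral, all of which converge to finite positive limits at $\alpha=1$ (reproducing the classical Weissler-type constants), so $\gamma_3$ stays bounded away from $0$ and $\infty$ near $\alpha=1$.
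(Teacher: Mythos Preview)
Your overall architecture --- reduce to $T=1$ by parabolic scaling, set up a contraction in a scale-critical Banach space, and track the $\alpha$-dependence through the moment identity \eqref{eq:1.4} and a Beta integral in the time convolution --- is exactly the strategy of the paper, and your discussion of why $\limsup_{\alpha\to1-0}\gamma_3$ is finite and positive is correct.

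The substantive difference is the choice of function space. The paper does \emph{not} use your time-weighted $L^q$ averages on parabolic balls with $q\in(\max(r_c,p),p\,r_c)$, $r_c=N(p-1)/2$; it works instead in the local Morrey space $M^{q,\lambda}$ with $\lambda=2r/(p-1)$ (so the hypothesis \eqref{Thm:1.3.1} is precisely $\|\mu|M^{r,\lambda}\|\le\gamma_3$) and $q$ chosen in $(\max(p,r),pr)$, with norm $\sup_{t\in(0,1)}t^{\beta}\|u(t)|M^{q,\lambda}\|$, $\beta=\frac{\alpha}{p-1}(1-r/q)$. The linear smoothing is the Morrey estimate $\|e^{s\Delta}\mu|M^{q,\lambda}\|\le C(1+s^{-\frac{\lambda}{2}(\frac1r-\frac1q)})\|\mu|M^{r,\lambda}\|$, obtained via the Kozono--Yamazaki Besov--Morrey embedding, and the resulting $h_\alpha$-moment one needs is $\int_0^\infty h_\alpha(\theta)\theta^{-\frac{1}{p-1}(1-r/q)}d\theta$. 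The key point is that with the paper's choice $q<pr$ one has $\frac{1}{p-1}(1-r/q)<\frac{1}{p-1}(1-\frac1p)=\frac1p<1$ automatically (equivalently $(p-1)^2>0$), so the moment is finite and the contraction closes for \emph{every} $p>p_F$ and every admissible $r$, with no case distinction whatsoever.

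This is where your proposal has a genuine gap. You assert that for $p<2$ and $r$ small the $L^q$-contraction fails and ``the argument must be run pointwise'' via a regularized Morrey majorant and a comparison estimate; you call this ``the technical heart'' but do not carry it out. In fact the obstruction is an artifact of your space: by tying $q$ to $r_c$ rather than to the hypothesis parameter $r$, and by measuring only at the single scale $\rho_t$ rather than in a Morrey norm, you lose the algebraic identity that makes the small-$\theta$ moment converge uniformly in $p$. The paper's Morrey framework avoids this entirely, so the deferred ``pointwise'' step is unnecessary. If you wish to keep your averaged-$L^q$ space, you would at minimum need to replace $r_c$ by $r$ in the choice of $q$ (so $q<pr$) and to control all scales $\le\rho_t$, at which point you have essentially reinvented the Morrey norm the paper uses.
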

%%%%%%%%%%%%%%%%%%%%%%%%%%%%%%%%%%%%%%%%%%%%%%%%%%%%%%%%%%%%%%%%%%%%%%%%%%%%%%%%%
%%%%%%%%%  Remark 1.3    %%%%%%%%%%%%%%%%%%%%%%%%%%%%%%%%%%%%%%%%%%%%%%%%%%%%%%%%
%%%%%%%%%%%%%%%%%%%%%%%%%%%%%%%%%%%%%%%%%%%%%%%%%%%%%%%%%%%%%%%%%%%%%%%%%%%%%%%%%	
	\begin{remark}
		\label{Rem:1.2}
		{\rm 
			Let $p=p_F$. It follows from Theorem~\ref{Thm:1.2} that if $\mu$ satisfies
			\[
			\mu({\bf R}^N)\le \gamma_2,
			\]
			then problem \eqref{TFF} possesses a global-in-time solution. As mentioned above, Zhang and Sun \cite{ZhanSun15} showed the global-in-time solvability of problem \eqref{TFF} with $p=p_F$ for initial data whose $L^{1}(\textbf{R}^N)$ norm (scale critical norm) is sufficiently small. We observe that Theorem~\ref{Thm:1.2} leads to the same conclusion as \cite{ZhanSun15}. For the local-in-time solvability, Ghergu, Miyamoto, and Suzuki \cite{GMS22} proved the existence of nonnegative solutions for arbitrary nonnegative $L^1$ initial value in the case of $p=p_F$. 
			Every  measurable function $f \in L^1({\bf R}^N)$ satisfies
			\[
			\lim_{R\to+0} \int_{B(z;R)} |f(y)| \, dy =0
			\]
			for all $z\in{\bf R}^N$. This together with Theorem~\ref{Thm:1.2} implies that for every nonnegative $f\in L^1({\bf R}^N)$, there exists  $T>0$ such that  problem \eqref{TFF} with $u(0)=f$  possesses a solution in $[0,T)\times{\bf R}^N$ in the case of $p=p_F$.
			Therefore, Theorem~\ref{Thm:1.2} leads to the same conclusion as \cite{GMS22}. See also \cite{OkaZhan23}.
		}
	\end{remark}
%%%%%%%%%%%%%%%%%%%%%%%%%%%%%%%%%%%%%%%%%%%%%%%%%%%%%%%%%%%%%%%%%%%%%%%%%%%%%%%%%
%%%%%%%   Remar 1.4     %%%%%%%%%%%%%%%%%%%%%%%%%%%%%%%%%%%%%%%%%%%%%%%%%%%%%%%%%
%%%%%%%%%%%%%%%%%%%%%%%%%%%%%%%%%%%%%%%%%%%%%%%%%%%%%%%%%%%%%%%%%%%%%%%%%%%%%%%%%%	
	\begin{remark}
		%Optimality
		{\rm 
    The power exponents of $\sigma$ appearing in the conditions \eqref{Thm:1.1.1} and \eqref{Thm:1.3.1} given in Theorems~\ref{Thm:1.1} and \ref{Thm:1.3} are optimal.
	 Indeed, let $p>p_F$, $\kappa>0$, and $\mu(x)=\kappa |x|^{-2/(p-1)}$. Then it is easy to see that \eqref{Thm:1.1.1} is violated for a 
        sufficiently large $ \kappa>0$, whence follows the nonexistence of local-in-time solutions of problem \eqref{TFF}. 
      On the other hand, for a sufficiently small $\kappa>0$, \eqref{Thm:1.3.1} is satisfied 
        for any $ r >1$ with $2r/(p-1)<N$ and $ \sigma \in (0,\infty)$, whence follows the existence of a 
          global-in-time solution of problem \eqref{TFF}.
		}
     \end{remark}

		In order to prove Theorems~\ref{Thm:1.2} and \ref{Thm:1.3}, 
		we shall work in the uniformly local Lebesgue space and the local Morrey space, respectively. 
		Recently, Oka and Zhanpeisov \cite{OkaZhan23} constructed a theory of the solvability of problem \eqref{TFF} in the Besov-Morrey space which is a more general concept than the uniformly local Lebesgue and the local Morrey space. Of course, focusing on the existence theory in more general functional spaces, their results are wider than Theorems~\ref{Thm:1.2} and \ref{Thm:1.3}. However, our main concern is the collapse of the solvability which shall be investigated through the life span estimates, rather than the construction of general functional analytic arguments. Therefore, we deduce sufficient conditions so that the existence time of a solution is easily recognizable, whereas in the theory of Oka and Zhanpeisov \cite{OkaZhan23}, the relationship between solvability and the life span of solutions was obscure.

%%%%%%%%%%%%%%%%%%%%%%%%%%%%%%%%%%%%%%%%%%%%%%%%%%%%%%%%%%%%%%%%%%%%%%%%%%%%%%%%%	
	
	%%%%%%%%%%%%%%%%%%%%%%%%%%%%%%%%%%%%%
	\subsection{Main results II (collapse of the solvability)} 
	%%%%%%%%%%%%%%%%%%%%%%%%%%%%%%%%%%%%%

	   As already mentioned above, for the critical case $p=p_F$, the collapse of the 
	    global-in-time solvability  and the local-in-time solvability of problem \eqref{TFF} occurs when $\alpha$ approaches $1-0$.
	    
	    Here, we first discuss the global-in-time solvability of problem \eqref{TFF} with $p=p_F$ as $\alpha\to 1-0$.
	  Let $\mathcal{M}$ be a set of Radon measures on ${\bf R}^N$ such that their norm satisfy $\|\nu|\mathcal{M}\|:=|\nu|({\bf R}^N) <\infty$, where $|\nu|$ is the total variation of $\nu$. 
	  In what follows, we denote $\nu \ge 0$ when $\nu$ is a nonnegative Radon measure on ${\bf R}^N$.
	  For $r>0$, set
	\[
	\mathcal{B}^+(r):=\left\{ ~\! \nu\in\mathcal{M} ~\! ; ~\! \nu \geq 0, 
                                            \ \|\nu|\mathcal{M}\|\le r  ~\! \right\}.
	\]
	Define
	\[
	\mathcal{G}_\alpha:= \{ ~\!  \nu\in \mathcal{M} ~\! ; ~\!  \mbox{\eqref{TFF} with} ~\! u(0)=\nu \geq 0 
                                  \,\,\mbox{possesses a global-in-time solution.} ~\! \}.
	\]
	
	We recall that $\mathcal{G}_\alpha\neq \{0\}$ by Remark \ref{Rem:1.2}. The following result shows that the collapse of the global-in-time solvability of 
	   problem \eqref{TFF} occurs in such a way that $\mathcal{G}_{\alpha}$ shrinks to $\{0\}$ as $\alpha \to 1-0$. Moreover, the explicit estimate of the diameter of $\mathcal{G}_{\alpha}$ is given below. 
%%%%%%%%%%%%%%%%%%%%%%%%%%%%%%%%%%%%%%%%%%%%%%%%%%%%%%%%%%%%%%%%%%%%%%%%%%%%%%%%%%%%%%%%%%%%%%%%
%%%%%%%   Theorem 1.4      %%%%%%%%%%%%%%%%%%%%%%%%%%%%%%%%%%%%%%%%%%%%%%%%%%%%%%%%%%%%%%%%%%%%%
%%%%%%%%%%%%%%%%%%%%%%%%%%%%%%%%%%%%%%%%%%%%%%%%%%%%%%%%%%%%%%%%%%%%%%%%%%%%%%%%%%%%%%%%%%%%%%%%
	\begin{theorem}
		\label{Thm:4.1}
		Let $p=p_F$. Then there exist positive constants $C_1 \le C_2$ independent of $\alpha$ such that, near $\alpha =1$,
		\begin{equation}
		\label{Thm:4.1.1}
		\mathcal{B}^+\left( C_1(1-\alpha)^{\frac{N}{2}} \right)\subset \mathcal{G}_{\alpha}\subset \mathcal{B}^+\left( C_2(1-\alpha)^{\frac{N}{2}} \right).
		\end{equation}
	\end{theorem}
%%%%%%%%%%%%%%%%%%%%%%%%%%%%%%%%%%%%%%%%%%%%%%%%%%%%%%%%%%%%%%%%%%%%%%%%%%%%%%%%%%%%%%%%%%%%%%%%
  We next discuss the collapse of the local-in-time solvability of problem \eqref{TFF} for the doubly critical case, $p=p_F$ and $ \mu \in L^{1}({\bf R}^N)$. For specific considerations, we here focus on the following typical initial data.
	\begin{equation}\label{eq. intro Miyamoto init data}
	f_{\epsilon}(x) :=|x|^{-N} \left|\log |x| \right|^{-\frac{N}{2}-1+\epsilon}\chi_{B(0;e^{-{3/2}})} 
	   \in L^{1}({\bf R}^N) 
	    \quad \text{with} \ 0<\epsilon<N/2.
	\end{equation}
 In fact, problem \eqref{Fjt} with $p=p_F$ and initial data $u(0)=f_{\epsilon}$ admits no nonnegative local-in-time solutions. See \cite{Miya21, BarasPierre85, HisaIshige18}. Let $T_{\alpha}\left[\mu \right]$ be the life span of solutions of problem \eqref{TFF} with the initial data $\mu$. 
   Then, $T_{\alpha}\left[ f_{\epsilon}\right] >0$ by Remark \ref{Rem:1.2}. 
   
    The following result shows that the collapse of the local-in-time solvability of 
	   problem \eqref{TFF} occurs in such a way that $T_{\alpha}\left[ f_{\epsilon}\right]$ 
         converges to zero as $\alpha \to 1-0$ 
	     with the explicit convergence rate given below. 

%%%%%%%%%%%%%%%%%%%%%%%%%%%%%%%%%%%%%%%%%%%%%%%%%%%%%%%%%%%%%%%%%%%%%%%%%%%%%%%%%%%%%%%%%%%%%
%%%%%%   Theorem 1.5     %%%%%%%%%%%%%%%%%%%%%%%%%%%%%%%%%%%%%%%%%%%%%%%%%%%%%%%%%%%%%%%%%%%%
%%%%%%%%%%%%%%%%%%%%%%%%%%%%%%%%%%%%%%%%%%%%%%%%%%%%%%%%%%%%%%%%%%%%%%%%%%%%%%%%%%%%%%%%%%%%%
	\begin{theorem}
		\label{Thm:4.2}
		Let $p=p_F$ and $0<\epsilon<N/2$. Then, there exist $C_1\le C_2$ such that, for each $\kappa>0$, there is $\alpha(\kappa,\epsilon)\in(0,1)$ such that for all $\alpha(\kappa,\epsilon)<\alpha<1$,
		\[
		C_1\kappa^{\frac{2}{N-2\epsilon}}(1-\alpha)^{-\frac{N}{N-2\epsilon}}\le -\log \left( T_{\alpha}[\kappa f_{\epsilon}] \right)\le C_2\kappa^{\frac{2}{N-2\epsilon}}(1-\alpha)^{-\frac{N}{N-2\epsilon}}.
		\]
		In particular, $\lim_{\alpha \to 1-0} T_{\alpha}[\kappa f_{\epsilon}] =0$ for all $\kappa>0$.
	\end{theorem}
%%%%%%%%%%%%%%%%%%%%%%%%%%%%%%%%%%%%%%%%%%%%%%%%%%%%%%%%%%%%%%%%%%%%%%%%%%%%%%%%%%%%%%%%%%%%%%	

	The rest of this paper is organized as follows.
	  In Section~\ref{section:Necessary condition}, we give a proof of Theorem~\ref{Thm:1.1},
        which is essentially based on the method of ordinary differential inequalities introduced 
           by \cite{LaiSie21} (see also \cite{FHIL23, HIT18}).
	  Theorems~\ref{Thm:1.2} and \ref{Thm:1.3} are proved in Section~\ref{section:Sufficient condition}. 
         When $\alpha=1$, Theorems~\ref{Thm:1.2} and \ref{Thm:1.3} coincide with 
           \cite[Theorems 1.3 and 1.4]{HisaIshige18} whose proofs rely on the supersolution method. 
      However, our argument is totally different from theirs. 
       Instead, we introduce the uniformly local $L^q$ space and the local Morrey space $M^{q,\lambda}$ 
         and apply the contraction mapping theorem in these spaces. 
	 In Section~\ref{section. Collapse of the solvability}, we give proofs of 
       Theorems~\ref{Thm:4.1} and \ref{Thm:4.2}.
	%%%%%%%%%%%%%%%%%%%%%%%%%%%%%%%%%%%%%
	%%%%%%%%%%%%%%%%%%%%%%%%%%%%%%%%%%%%%
	\section{Necessary conditions for the solvability.}\label{section:Necessary condition}
	%%%%%%%%%%%%%%%%%%%%%%%%%%%%%%%%%%%%%
	%%%%%%%%%%%%%%%%%%%%%%%%%%%%%%%%%%%%%
	In this section, we give a proof of Theorem~\ref{Thm:1.1}. We follow the argument \cite{FHIL23,HIT18,LaiSie21} in which an inequality related to
	\[
	\int_{{\bf R}^N} G(t^\alpha,x) u(t,x) \,dx.
	\]
	is essential. For the application of this technique, we should estimate the effect of the function $h_{\alpha}$, due to the nonlocality of $P_{\alpha}(t)$ and $S_{\alpha}(t)$.
	
	\begin{proof}[Proof of Theorem \ref{Thm:1.1}]
	Let $p>1$. Suppose that $0<\rho< (16^{-1}T)^{\alpha/2}$. 
		Let $\overline{T}\in (T/3, T/2)$ and $\overline{z} \in {\bf R}^N$ be such that
		\begin{equation}
		\label{eq:3.1}
		\begin{split}
		\infty &> u(2\overline{T}, \overline{z} ) \\
		&\ge \alpha \int_0^{2\overline{T}} (2\overline{T}-s)^{\alpha-1}\int_0^\infty\theta h_\alpha(\theta)\\
		&\qquad\qquad\qquad\times\int_{{\bf R}^N} G((2\overline{T}-s)^\alpha \theta, \overline{z} -y) u(s,y)^p \, dyd\theta ds,
		\end{split}
		\end{equation}
		where we used the definition of solutions and the nonnegativity of $P_{\alpha}(t)\mu$. Set
		\begin{equation*}
		U(t):= \int_{{\bf R}^N} G(t^\alpha,x)u(t,x+\overline{z}) \,dx.
		\end{equation*}
		Note that $U(t)\ge 0$ since $u$ is nonnegative.
		
		\noindent\textbf{Step 1.} First, we prove $U(t) <\infty $ for almost all $t\in (\rho^{2/\alpha},T/3)$.
		Application of the Jensen inequality to \eqref{eq:3.1} yields
		\begin{equation}
		\label{eq:3.2}
		\begin{split}
		\infty &> u( 2\overline{T}, \overline{z}) \\
		& \ge C \alpha \overline{T}^{\alpha-1} \int_{\rho^{2/\alpha}}^{2\overline{T}} \int_{1}^{\infty}  h_\alpha(\theta)\int_{{\bf R}^N} G((2\overline{T}-s)^\alpha \theta,y) u(s,y+\overline{z})^p \, dyd\theta ds\\
		& \ge C \alpha \overline{T}^{\alpha-1} \int_{\rho^{2/\alpha}}^{\overline{T}} \int_{1}^{\infty} h_\alpha(\theta)\left(\int_{{\bf R}^N} G((2\overline{T}-s)^\alpha \theta,y) u(s,y+\overline{z}) \, dy\right)^p d\theta ds,
		\end{split}
		\end{equation}
		where we used the definition of solutions and the nonnegativity of $P_{\alpha}(t)\mu$. 
		Since $s\le 2\overline{T}-s\le T$ for $0<s<\overline{T}(<T/2)$, we have
		\begin{equation}
		\label{eq:3.3}
		\begin{split}
		G((2\overline{T}-s)^\alpha \theta,y) 
		&= \frac{1}{(4\pi (2\overline{T}-s)^\alpha \theta)^{\frac{N}{2}} }\exp\left(-\frac{|y|^2}{4(2\overline{T}-s)^\alpha \theta}\right)\\
		&\ge \frac{1}{\theta^\frac{N}{2}}\left( \frac{s}{2\overline{T}-s}\right)^\frac{N\alpha}{2}\frac{1}{(4\pi s^\alpha )^{\frac{N}{2}} }\exp\left(-\frac{|y|^2}{4s^\alpha}\right)\\
		&\ge \frac{C\rho^N}{\theta^\frac{N}{2}T^\frac{N\alpha}{2}}G(s^\alpha,y)\\
		\end{split}
		\end{equation}
		for all $s\in(\rho^{2/\alpha}, \overline{T})$ and $y\in{\bf R}^N$.
		Combining \eqref{eq:3.2} and \eqref{eq:3.3}, we see that there exists a constant 
		$C_*=C_*(N,\alpha, p, \rho, T)>0$ such that
		\begin{equation*}
		\begin{split}
		\infty >  u(2\overline{T},\overline{z} )  &\ge C_* \int_{1}^{\infty} \theta^{-\frac{Np}{2}}h_{\alpha}(\theta)d\theta \int_{\rho^{2/\alpha}}^{\overline{T}} U(s)^p \,ds\ge C_* \int_{\rho^{2/\alpha}}^{T/3} U(s)^p \,ds.
		\end{split}
		\end{equation*}
		This implies that $U(t)<\infty$ for almost all $t\in (\rho^{2/\alpha},T/3)$. Note that
		\[
		1= \int_{0}^{\infty} h_{\alpha}(\theta)d\theta >\int_{1}^{\infty} \theta^{-\frac{Np}{2}} h_{\alpha}(\theta)d\theta>0,
		\]
		since $h_{\alpha}(\theta)>0$.		
		
		\noindent\textbf{Step 2.} For $t\in(\rho^{2/\alpha},T/3)$ and $\overline{z}\in{\bf R}^N$, set
		\begin{equation*}
		V(t):= t^\frac{N\alpha}{2}U(t), \qquad M:= \mu(B(\overline{z};\rho)).
		\end{equation*}
		Here, we claim that
		\begin{equation}
		\label{eq:3.4}
		\begin{split}
		\infty>V(t) 
		&\ge C_1r_1(\alpha) M+	C_2 r_2(\alpha)T^{\alpha-1}\int_{\rho^{2/\alpha}}^t s^{-\frac{N\alpha}{2}(p-1)} V(s)^p \,ds
		\end{split}
		\end{equation}
		for  almost all $t \in(\rho^{2/\alpha},T/3)$, where
		\[
		r_1(\alpha) = E_{\alpha,1} \left(-\frac{1}{2}\right)>0, \quad r_2(\alpha) =\alpha E_{\alpha,\alpha}  \left(-\frac{1}{2}\right)>0, 
		\]
		and $C_1, C_2$ are positive constants depending only on $N$ and $p$.
		By the definition of $E_{\alpha,\beta}(z)$, we see easily that
		\[
		\limsup_{\alpha\to1-0} r_i(\alpha) \in (0,\infty)
		\]
		for $i=1,2$. By Definition \ref{Def:1.1}, we have
		\begin{equation*}
		\begin{split}
		u(t,x+\overline{z}) 
		&= \int_0^\infty \int_{{\bf R}^N}h_\alpha(\theta) G(t^\alpha\theta, x+\overline{z}-y)\,d\mu(y)d\theta\\
		& +\alpha \int_0^t(t-s)^{\alpha-1} [S_\alpha (t-s) u(s)^p](x+\overline{z}) \, ds,
		\end{split}
		\end{equation*}
		for almost all $t\in(\rho^{2/\alpha},T/3)$ and $x\in{\bf R}^N$.
		Multiplying $G(t^\alpha, x)$ by both side and integrating with respect to $x$, we obtain
		\begin{equation}
		\label{eq:2.5}
		\begin{split}
		\infty>U(t) &
		= \int_{{\bf R}^N} \left[\int_0^\infty h_\alpha (\theta) \left(\int_{{\bf R}^N} G(t^\alpha \theta, x+\overline{z}-y)G(t^\alpha,x)\,dx\right)\,d\theta\right]\,d\mu(y)\\
		&+\alpha \int_{{\bf R}^N} G(t^\alpha, x) \left(\int_0^t(t-s)^{\alpha-1}[S_\alpha(t-s)u(s)^p](x+z)\,ds\right)\,dx\\
		&=: I_1 + \alpha I_2,
		\end{split}
		\end{equation}
		for almost all $t\in(\rho^{2/\alpha},T/3)$.
		For $I_1$, by the semigroup property of the heat kernel $G$ and $\mu \ge 0$, we deduce
		\begin{equation}
		\label{eq:2.6}
		\begin{split}
		I_1&= \int_{{\bf R}^N}\int_0^\infty h_\alpha (\theta) G(t^\alpha(1+\theta),\overline{z}-y)\,d\theta d\mu(y)\\
		&\ge C \left(\int_0^\infty h_\alpha (\theta) (1+\theta)^{-\frac{N}{2}}\, d\theta\right) t^{-\frac{N\alpha}{2}} \int_{B(\overline{z};\rho)} \exp\left(-\frac{|\overline{z}-y|^2}{4t^\alpha}\right) d\mu(y)\\
		&\ge C \left(\int_0^\infty h_\alpha (\theta) (1+\theta)^{-\frac{N}{2}}\,d\theta\right) t^{-\frac{N\alpha}{2}} M,
		\end{split}
		\end{equation}
		for almost all $t\in(\rho^{2/\alpha},T/3)$.
		For $I_2$, we similarly use the semigroup property to get
		\begin{equation}
		\label{eq:2.7}
		\begin{split}
		I_2 = \int_0^t (t-s)^{\alpha-1} \int_{{\bf R}^N}\int_0^\infty \theta h_\alpha(\theta) G(t^\alpha+(t-s)^\alpha\theta, \overline{z}-y)u(s,y)^p\, d\theta dyds.
		\end{split}
		\end{equation}
		Since $t>s$, we have
		\begin{equation*}
		\begin{split}
		&\exp\left(-\frac{|y-\overline{z}|^2}{4t^\alpha + 4(t-s)^\alpha \theta}\right) \ge \exp\left(-\frac{|y-\overline{z}|^2}{4s^\alpha}\right),\\
		&t^\alpha + (t-s)^\alpha \theta \le t^\alpha(1+\theta) = s^\alpha \left(\frac{t}{s}\right)^\alpha (1+\theta).
		\end{split}
		\end{equation*}
		This together with \eqref{eq:2.5}, \eqref{eq:2.6}, \eqref{eq:2.7}, and the Jensen inequality implies that
		\begin{equation}
		\label{eq:2.8}
		\begin{split}
		\infty>U(t) &\ge C_1 \left(\int_0^\infty h_\alpha (\theta) (1+\theta)^{-\frac{N}{2}} \, d\theta \right) t^{-\frac{N\alpha}{2}}M\\
		& + C_2\alpha  T^{\alpha-1} \left(\int_{0}^\infty \theta h_\alpha(\theta) (1+\theta)^{-\frac{N}{2}}\,d\theta\right) t^{-\frac{N\alpha}{2}} \int_{\rho^{2/\alpha}}^t s^{\frac{N\alpha}{2}} U(s)^p\,ds,
		\end{split}
		\end{equation}
		for almost all $t\in(\rho^{2/\alpha},T/3)$.
		Since the function $\theta \mapsto e^{-\theta/2}(1+\theta)^{N/2}$ is bounded on $[0,\infty)$, we use \eqref{eq:1.4.5} to deduce 
		\begin{equation*}
		\begin{split}
		&\int_0^\infty h_\alpha (\theta) (1+\theta)^{-\frac{N}{2}} \, d\theta \ge C\int_0^\infty h_\alpha (\theta) e^{-\frac{\theta}{2}} \, d\theta = CE_{\alpha,1}\left(-\frac{1}{2}\right) = Cr_1(\alpha),\\
		&\alpha \int_{0}^\infty \theta h_\alpha(\theta) (1+\theta)^{-\frac{N}{2}}\,d\theta \ge C\alpha \int_0^\infty \theta h_\alpha (\theta) e^{-\frac{\theta}{2}} \, d\theta = C\alpha E_{\alpha,\alpha}\left(-\frac{1}{2}\right) =C r_2(\alpha),\\
		\end{split}
		\end{equation*}
		for some $C=C(N)>0$. Therefore \eqref{eq:2.8} yields \eqref{eq:3.4}.
		
		\noindent\textbf{Step 3.} We may then let $\zeta$ denote the unique local solution of the integral equation
		\begin{equation}
		\label{eq:3.5}
		\zeta(t) = C_1r_1(\alpha) M+	C_2 r_2(\alpha)T^{\alpha-1}\int_{\rho^{2/\alpha}}^t s^{-\frac{N\alpha}{2}(p-1)} \zeta(s)^p \,ds
		\end{equation}
		for  $t \in[\rho^{2/\alpha},T/4]$. 
		Hence, $\zeta$ is the unique local solution of 
		\begin{equation*}
		\zeta'(t) = C_2 r_2(\alpha) T^{\alpha-1}t^{-\frac{N\alpha}{2}(p-1)} \zeta(t)^p, \qquad \zeta(\rho^\frac{2}{\alpha}) = C_1r_1(\alpha) M.
		\end{equation*}
		
		Applying the standard theory for ordinary differential equations to \eqref{eq:3.5},
		we see from  \eqref{eq:3.4}  that the solution $\zeta$ exists in $[\rho^{2/\alpha}, T/4]$.
		It is easy to solve this ordinary differential equation and get
		\[
		C_2r_2(\alpha) T^{\alpha-1} \int_{\rho^{2/\alpha}}^{T/4} t^{-\frac{N\alpha}{2}(p-1)} \, dt = \int_{\zeta(\rho^{2/\alpha})}^{\zeta(T/4)} \zeta^{-p}\,d\zeta 
		\le \int_{\zeta(\rho^{2/\alpha})}^{\infty} \zeta^{-p}\,d\zeta. 
		\]
		This implies that
		\[
		\mu(B(\overline{z};\rho))=M\le \frac{C}{r_1(\alpha) r_2(\alpha)^\frac{1}{p-1}} T^{-\frac{\alpha-1}{p-1}} \left(\int_{\rho^{2/\alpha}}^{T/4} t^{-\frac{N\alpha}{2}(p-1)} \, dt \right)^{-\frac{1}{p-1}}
		\]
		for all  $\rho\in (0, (T/16)^{\alpha/2})$. 
		Setting $\sigma:= 4^\alpha \rho$, we obtain
		\begin{equation}
		\label{eq:3.6}
		\mu(B(\overline{z};4^{-\alpha}\sigma)) \le \frac{C}{r_1(\alpha) r_2(\alpha)^\frac{1}{p-1}} T^{-\frac{\alpha-1}{p-1}} \left(\int_{\sigma^{2/\alpha}/16}^{T/4} t^{-\frac{N\alpha}{2}(p-1)} \, dt \right)^{-\frac{1}{p-1}}
		\end{equation}
		for all $\sigma \in (0,T^{\alpha/2})$. 
		Let $z\in{\bf R}^N$.
		Then we can take integer $m=m(N,\alpha)$ satisfying $\limsup_{\alpha\to1-0} m(\alpha)<\infty$
		and $\{\overline{z}_i\}_{i=1}^m\subset {\bf R}^N$  satisfying \eqref{eq:3.6} such that 
		\begin{equation*}
		\begin{split}
		\mu(B(z;\sigma)) 
		&\le \sum_{i=1}^m \mu(B(\overline{z}_i;4^{-\alpha}\sigma)) \\
		&\le \frac{Cm}{r_1(\alpha) r_2(\alpha)^\frac{1}{p-1}} T^{-\frac{\alpha-1}{p-1}} \left(\int_{\sigma^{2/\alpha}/16}^{T/4} t^{-\frac{N\alpha}{2}(p-1)} \, dt \right)^{-\frac{1}{p-1}}
		\end{split}
		\end{equation*}
		for all $\sigma \in (0,T^{\alpha/2})$. 
		Since $z\in{\bf R}^N$ is arbitrary, we obtain \eqref{Thm:1.1.2}.
		Furthermore, applying the above argument with $T = \sigma^{2/\alpha}$, we obtain \eqref{Thm:1.1.1}.
	\end{proof}

	%%%%%%%%%%%%%%%%%%%%%%%%%%%%%%%%%%%%%
	%%%%%%%%%%%%%%%%%%%%%%%%%%%%%%%%%%%%%
	\section{Sufficient conditions for the solvability.}\label{section:Sufficient condition}
	%%%%%%%%%%%%%%%%%%%%%%%%%%%%%%%%%%%%%
	%%%%%%%%%%%%%%%%%%%%%%%%%%%%%%%%%%%%%

	%%%%%%%%%%%%%%%%%%%%%%%%%%%%%%%%%%%%%
	\subsection{Preliminaries.}
	%%%%%%%%%%%%%%%%%%%%%%%%%%%%%%%%%%%%%
	To prove Theorem~\ref{Thm:1.2},
	we introduce the uniformly local  $L^q$ space.
	For details, see e.g. \cite{MaeTera06, FujiIoku18, QuitSoup19, ARCD2004}.
	\begin{definition}
	Let $L^p_{{\rm uloc}}=L^p_{{\rm uloc}}({\bf R}^N)$ {\rm ( $1\le p\le\infty$ )} 
          be the set of all measurable functions $f$ 
            defined in ${\bf R}^N$ such that
		\[
		\|f|L^p_{{\rm uloc}}\| := \sup_{z\in{\bf R}^N} \|f|L^p(B(z;1))\|<\infty.
		\]
	\end{definition}
	 We note that $L^p_{{\rm uloc}}({\bf R}^N)$ with norm $\|\cdot|L^p_{{\rm uloc}}\|$ forms a Banach space. 
	   Furthermore, $M^1=M^1({\bf R}^N)$ is defined as the set of all Radon measures
          $\mu$ on ${\bf R}^N$ such that
	\[
	\|\mu|M^1\| := \sup_{z\in{\bf R}^N} |\mu|(B(z;1)) <\infty,
	\]
	where $|\mu|$ is the total variation of $\mu$. 
	To prove Theorem~\ref{Thm:1.2}, we heavily rely on the following proposition and \eqref{MPMQ lambdaN} below. 
%%%%%%%%%%%%%%%%%%%%%%%%%%%%%%%%%%%%%%%%%%%%%%%%%%%%%%%%%%%%%%%%%%%%%%%%%%%%%%%%%%
%%%%%%%%   Proposition 3.1     %%%%%%%%%%%%%%%%%%%%%%%%%%%%%%%%%%%%%%%%%%%%%%%%%%%
%%%%%%%%%%%%%%%%%%%%%%%%%%%%%%%%%%%%%%%%%%%%%%%%%%%%%%%%%%%%%%%%%%%%%%%%%%%%%%%%%%
	\begin{proposition}
		\label{LpLq} 
		Let $1\le p\le \infty$. One has
		\begin{equation}
		\|e^{t\Delta} \mu|L^p_{{\rm uloc}}\| \le C\left(1+t^{-\frac{N}{2}(1- \frac{1}{p})}\right) \|\mu|M^1\|
		\end{equation}
		for $\mu\in M^1({\bf R}^N)$ and $t>0$. 
	\end{proposition}
%%%%%%%%%%%%%%%%%%%%%%%%%%%%%%%%%%%%%%%%%%%%%%%%%%%%%%%%%%%%%%%%%%%%%%%%%%%%%%%%
	\begin{proof}
		We can show the estimate by the modified proof of \cite[Theorem 3.1]{MaeTera06}. We just state the essential part. The key estimates are equations (3.13) and (3.14) in \cite[page 382]{MaeTera06} which are summarized as follows:
		\[
		\begin{aligned}
		&\left\| e^{t\Delta}\mu | L^p\left( S\left( k,1/2 \right) \right) \right\|\\
		&\le\left\| \sum_{k',k''\in \textbf{Z}^N} \left( \chi_{S\left( k',1/2 \right)}G(t,\cdot)\right) \ast \left( \chi_{S\left( k'',1/2 \right)}\mu \right)  |L^p\left( S\left( k,1/2 \right) \right)\right\|\\
		&\le \sum_{k'\in \textbf{Z}^N} 3^N \left\| \chi_{S\left( k',1/2 \right)}G(t,\cdot) |L^p\right\| \sup_{k''\in \textbf{Z}^N} |\mu|\left( S\left( k'',1/2 \right) \right),
		\end{aligned}
		\]
		where
		\[
		S(\zeta, l):= \left\{ y; \max_{1\le i\le N} |y_i-\zeta_i|\le l \right\}.
		\]
		For $p=1$ and $p=\infty$, we can justify the above estimate for $\mu \in M^1(\textbf{R}^N)$. For general $p\in (1,\infty)$, we use the interpolation argument.
	\end{proof}
%%%%%%%%%%%%%%%%%%%%%%%%%%%%%%%%%%%%%%%%%%%%%%%%%%%%%%%%%%%%%%%%%%%%%%%%%%%%%%%
	
	In order to prove Theorem~\ref{Thm:1.3}, we introduce the local Morrey space. For details, see \cite{GM89, Kato92, KozoYama94}.
%%%%%%%%%%%%%%%%%%%%%%%%%%%%%%%%%%%%%%%%%%%%%%%%%%%%%%%%%%%%%%%%%%%%%%%%%%%%%%%
%%%%%%    Deinition 3.2       %%%%%%%%%%%%%%%%%%%%%%%%%%%%%%%%%%%%%%%%%%%%%%%%%
%%%%%%%%%%%%%%%%%%%%%%%%%%%%%%%%%%%%%%%%%%%%%%%%%%%%%%%%%%%%%%%%%%%%%%%%%%%%%%%
	\begin{definition}
	 Let $1\le q< \infty$ and $0<\lambda \le N$. 
		The local Morrey space $M^{q,\lambda}=M^{q,\lambda}({\bf R}^N)$ is defined as the set of all measurable functions $f$ defined in ${\bf R}^N$ such that
	\begin{equation}\label{def:Morrey}
	  \|f|M^{q,\lambda}\| 
         := \sup_{z\in{\bf R}^N} \sup_{R\in(0,1]} R^{\frac{1}{q}(\lambda-N)} \|f|L^q(B(z;R))\|
            < \infty.
	\end{equation}
	\end{definition}
%%%%%%%%%%%%%%%%%%%%%%%%%%%%%%%%%%%%%%%%%%%%%%%%%%%%%%%%%%%%%%%%%%%%%%%%%%%%%%
	We note that $M^{q,\lambda}({\bf R}^N)$ with norm $\|\cdot|M^{q,\lambda}\|$ forms a Banach space. 

	To prove Theorem~\ref{Thm:1.3}, the following proposition plays an important role.

%%%%%%%%%%%%%%%%%%%%%%%%%%%%%%%%%%%%%%%%%%%%%%%%%%%%%%%%%%%%%%%%%%%%%%%%%%%%
%%%%%   Proposition 3.2.    %%%%%%%%%%%%%%%%%%%%%%%%%%%%%%%%%%%%%%%%%%%%%%%%
%%%%%%%%%%%%%%%%%%%%%%%%%%%%%%%%%%%%%%%%%%%%%%%%%%%%%%%%%%%%%%%%%%%%%%%%%%%%
	\begin{proposition}
		\label{MpMq} 
		Let $1\le q \le p <\infty$ and $0< \lambda \le N$. Then there exists a constant $C>0$ such that
		\begin{equation}
		\label{MPMQ1}
		\|e^{t\Delta} \mu|M^{p,\lambda}\| \le C (1+t^{-\frac{\lambda}{2}(\frac{1}{q}-\frac{1}{p})}) \|\mu|M^{q,\lambda}\|
		\end{equation}
		for $\mu\in M^{q,\lambda}({\bf R}^N)$ and $t>0$. 

	\end{proposition}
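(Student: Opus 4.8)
The plan is to prove the weighted bound defining the $M^{p,\lambda}$ norm ball by ball, playing the genuinely $N$-dimensional smoothing of $e^{t\Delta}$ against the $\lambda$-dimensional scaling carried by the Morrey condition, and splitting into two regimes according to the relative size of the ball radius $R\in(0,1]$ and the diffusion scale $\sqrt t$. Since $e^{t\Delta}$ is positivity preserving, $|e^{t\Delta}\mu|\le e^{t\Delta}|\mu|$, so I may assume $\mu\ge0$. Fixing $z\in{\bf R}^N$ and $R\in(0,1]$, the goal is to bound $R^{\frac1p(\lambda-N)}\|e^{t\Delta}\mu|L^p(B(z,R))\|$ by $C(1+t^{-\frac\lambda2(\frac1q-\frac1p)})\|\mu|M^{q,\lambda}\|$ uniformly in $z,R,t$. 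Throughout, the two elementary facts used are the Young/Gaussian bounds $\|G(t)|L^r\|=Ct^{-\frac N2(1-\frac1r)}$ and the local Morrey bound $\|\mu|L^q(B(w,\rho))\|\le\rho^{\frac1q(N-\lambda)}\|\mu|M^{q,\lambda}\|$ for $\rho\le 1$ (with unit-ball covering when $\rho>1$).

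In the regime $R\le\sqrt t$ (which includes all $t\ge1$), I would use the crude bound $\|e^{t\Delta}\mu|L^p(B(z,R))\|\le R^{N/p}\|e^{t\Delta}\mu|L^\infty(B(z,R))\|$ and estimate the sup norm by decomposing the Gaussian into dyadic shells $\{2^{k-1}\sqrt t<|x-y|\le 2^k\sqrt t\}$ around the evaluation point, applying H\"older on each shell together with the Morrey bound, and summing the tail $\sum_k 2^{ck}e^{-c4^k}<\infty$. The leading shell gives $\|e^{t\Delta}\mu|L^\infty\|\lesssim(1+t^{-\lambda/(2q)})\|\mu|M^{q,\lambda}\|$. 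Multiplying by the weight and the volume factor leaves $R^{\lambda/p}$, and the constraint $R\le\sqrt t$ (so $R^{\lambda/p}\le t^{\lambda/(2p)}$) upgrades $t^{-\lambda/(2q)}$ to the desired $t^{-\frac\lambda2(\frac1q-\frac1p)}$, while $R^{\lambda/p}\le1$ preserves the constant term.

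The main regime is $\sqrt t<R\le1$ (forcing $t<1$). Here I would cover $B(z,R)$ by finitely overlapping balls $\{B(w_i,\sqrt t)\}_{i\in I}$ with $|I|\lesssim(R/\sqrt t)^N$, so that $\|e^{t\Delta}\mu|L^p(B(z,R))\|\le(\sum_i\|e^{t\Delta}\mu|L^p(B(w_i,\sqrt t))\|^p)^{1/p}$. On each ball I split $\mu$ into its restriction to $B(w_i,2\sqrt t)$, handled by Young with cost $t^{-\frac N2(\frac1q-\frac1p)}b_i$ where $b_i:=\|\mu|L^q(B(w_i,2\sqrt t))\|$, plus outer dyadic shells carrying Gaussian decay. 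The decisive step is to estimate $\|(b_i)|\ell^p(I)\|$ \emph{not} by the lossy embedding $\ell^q\hookrightarrow\ell^p$, but by interpolating between the coarse-scale bound $\|(b_i)|\ell^q\|\lesssim R^{(N-\lambda)/q}\|\mu|M^{q,\lambda}\|$ (finite-overlap summation $=$ Morrey at scale $R$) and the fine-scale bound $\|(b_i)|\ell^\infty\|\lesssim t^{(N-\lambda)/(2q)}\|\mu|M^{q,\lambda}\|$ (Morrey at scale $\sqrt t$). With interpolation exponent $\theta=q/p$ this yields $\|(b_i)|\ell^p\|\lesssim R^{(N-\lambda)/p}\,t^{\frac12(N-\lambda)(\frac1q-\frac1p)}\|\mu|M^{q,\lambda}\|$; multiplying by the Young factor collapses the time exponent from $N$ to $\lambda$, and the weight $R^{\frac1p(\lambda-N)}$ cancels $R^{(N-\lambda)/p}$ exactly, producing precisely $t^{-\frac\lambda2(\frac1q-\frac1p)}\|\mu|M^{q,\lambda}\|$. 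The outer shells reproduce the same structure with an extra $e^{-c4^k}$, hence sum to a convergent series.

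The step I expect to be the crux is this interpolation: Young's inequality only ever sees the ambient dimension $N$, so the entire reduction to the Morrey exponent $\lambda$ must be extracted from balancing the coarse ($\ell^q$, scale $R$) against the fine ($\ell^\infty$, scale $\sqrt t$) information about the sequence of local masses $(b_i)$, and making the $R$- and $t$-powers close exactly (and cancel the weight) is where the argument genuinely lives. The remaining technical care is routine but must be done honestly: controlling the Gaussian tails uniformly across shells whose radius crosses $1$ (where the Morrey bound switches from the scale-$R$ power law to unit-ball covering), which is harmless because of the super-exponential factor $e^{-c4^k}$, and absorbing boundary scales such as $2\sqrt t$ or $3R$ near $1$ into constants using $R\le1$.
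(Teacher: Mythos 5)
Your proof is correct, but it takes a genuinely different route from the paper. The paper handles $p=q$ by Jensen's inequality and Fubini (giving $\|e^{t\Delta}\mu|M^{p,\lambda}\|\le\|\mu|M^{p,\lambda}\|$ directly), and for $p>q$ it imports the Besov--Morrey machinery of Kozono--Yamazaki: the embeddings $N^0_{p',q',1}\hookrightarrow M^{q',q'N/p'}\hookrightarrow N^0_{p',q',\infty}$, a Sobolev-type embedding between Besov--Morrey scales, and the heat-semigroup smoothing estimate in those spaces, chained together so that the loss of regularity $\sigma-s=\lambda(\tfrac1q-\tfrac1p)$ produces exactly the exponent in \eqref{MPMQ1}. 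You instead give a self-contained real-variable argument: dyadic Gaussian shells for the regime $R\le\sqrt t$, and for $\sqrt t<R\le1$ a covering of $B(z,R)$ by $\sim(R/\sqrt t)^N$ balls of radius $\sqrt t$, Young's inequality on each, and the $\ell^q$--$\ell^\infty$ interpolation $\|(b_i)\|_{\ell^p}\le\|(b_i)\|_{\ell^q}^{q/p}\|(b_i)\|_{\ell^\infty}^{1-q/p}$ applied to the local masses, which is precisely what converts the ambient dimension $N$ appearing in Young's inequality into the Morrey exponent $\lambda$ and cancels the weight $R^{(\lambda-N)/p}$. I checked the exponent bookkeeping in both regimes and it closes exactly; the remaining issues you flag (unit-ball covering when a shell radius exceeds $1$, bounded overlap of the covering at scale $2\sqrt t$, absorbing scales like $3R$ into constants) are genuinely routine and are crushed by the $e^{-c4^k}$ factors. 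What your approach buys is elementarity and transparency about where the exponent $\lambda$ comes from; what the paper's approach buys is brevity, at the cost of citing three nontrivial results on Besov--Morrey spaces. Either proof is acceptable.
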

%%%%%%%%%%%%%%%%%%%%%%%%%%%%%%%%%%%%%%%%%%%%%%%%%%%%%%%%%%%%%%%%%%%%%%%%%%%
	
	Note that \eqref{MPMQ1} with $\lambda =N$ implies
	\begin{equation}
		\label{MPMQ lambdaN}
		\| e^{t\Delta}\mu | L^{p}_{{\rm uloc}} \|\le C \left( 1+ t^{-\frac{N}{2}\left( \frac{1}{q}-\frac{1}{p} \right)} \right) \| \mu | L^{q}_{{\rm uloc}}\|,
	\end{equation}
	which has been proved in \cite{MaeTera06}.
	In order to prove Proposition \ref{MpMq}, we recall the inhomogeneous Besov--Morrey space introduced by Kozono--Yamazaki \cite{KozoYama94}.
	Let $\zeta(t)$ be a $C^\infty$-function on $[0,\infty)$ such that 
      $0\le \zeta(t)\le 1$, $\zeta(t)\equiv1$ for $t \in [0, 3/2]$, 
         and $\mbox{supp}\,\zeta \subset[0,5/3)$.
	For every $j\in{\bf Z}$, put $\varphi(\xi) := \zeta(2^{-j}|\xi|)-\zeta(2^{1-j}|\xi|)$ and $\varphi_{(0)}(\xi):=\zeta(|\xi|)$.
	Then we have $\varphi_{j}, \varphi_{(0)}\in C_{0}^\infty({\bf R}^N)$ for all $j\in{\bf Z}$, and 
	\[
	\varphi_{(0)}(\xi) + \sum_{j=1}^\infty \varphi_j(\xi) =1 \quad \mbox{for all} \quad \xi\in{\bf R}^N.
	\]

%%%%%%%%%%%%%%%%%%%%%%%%%%%%%%%%%%%%%%%%%%%%%%%%%%%%%%%%%%%%%%%%%%%%%%%%%%%
%%%%%%%  Definition  3.3.     %%%%%%%%%%%%%%%%%%%%%%%%%%%%%%%%%%%%%%%%%%%%%
%%%%%%%%%%%%%%%%%%%%%%%%%%%%%%%%%%%%%%%%%%%%%%%%%%%%%%%%%%%%%%%%%%%%%%%%%%%	
	\begin{definition}
		Let $1\le q \le p <\infty$, $1\le r \le \infty$, and $s\in{\bf R}$.
	 	  The local Besov--Morrey space $N^s_{p,q,r}=N^s_{p,q,r}({\bf R}^N)$ is defined as 
            the set of all tempered distributions  $f\in \mathcal{S}'$ such that
		\[
		\|f|{N^s_{p,q,r}}\| : = \|\mathcal{F}^{-1}\varphi_{(0)}(\xi)\mathcal{F}f|M^{q,\lambda}\| + \left\| \{2^{sj} \|\mathcal{F}^{-1} \varphi_j(\xi) \mathcal{F}f|M^{q,\lambda}\|\}_{j=1}^\infty|l^r \right\| <\infty,
		\]
		where $\lambda := qN/p \le N$ and $\mathcal{F}$ denotes the Fourier transform on ${\bf  R}^N$.
		Here, $l^r$ denotes the usual sequence space.
		
	\end{definition}
%%%%%%%%%%%%%%%%%%%%%%%%%%%%%%%%%%%%%%%%%%%%%%%%%%%%%%%%%%%%%%%%%%%%%%%%%%%
	
	Next, we prepare two lemmas to prove Proposition \ref{MpMq}, which have been proved by Kozono--Yamazaki \cite[Theorem 2.5, Proposition 2.11, and Theorem 3.1]{KozoYama94}.
	
	\begin{lemma}
		Let $1\le q'\le p' <\infty$, $1\le r\le\infty$,  $s\in{\bf R}$, and $\theta\in(0,1)$.
		Then the following embeddings are continuous:
		\begin{equation}
		\label{em1}
		N^{s}_{p',q',r} \hookrightarrow N^{s-\frac{N}{p'}(1-\theta)}_{\frac{p'}{\theta}, \frac{q'}{\theta}, r},
		\end{equation}
		\begin{equation}
		\label{em2}
		N^{0}_{p',q',1} \hookrightarrow M^{q', \frac{q'N}{p'}} \hookrightarrow N^{0}_{p',q',\infty}.
		\end{equation}
	\end{lemma}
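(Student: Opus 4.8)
The plan is to deduce both embeddings from Littlewood--Paley theory adapted to the local Morrey scale, exactly as in Kozono--Yamazaki \cite{KozoYama94}; I sketch the self-contained route. Write $\Delta_j f := \mathcal{F}^{-1}\varphi_j(\xi)\mathcal{F}f$ and $\Delta_{(0)}f := \mathcal{F}^{-1}\varphi_{(0)}(\xi)\mathcal{F}f$, so that $f = \Delta_{(0)}f + \sum_{j\ge1}\Delta_j f$. The two workhorses are: (a) the frequency projectors are bounded on $M^{q',\lambda}$ \emph{uniformly in} $j$, which follows from the translation-invariance of the Morrey norm together with Minkowski's integral inequality, giving the Young-type bound $\|k*g|M^{q',\lambda}\|\le\|k|L^1\|\,\|g|M^{q',\lambda}\|$; applied to the kernels $k_j=\mathcal{F}^{-1}\varphi_j$, which are dyadic dilates of a fixed Schwartz function and hence have $j$-independent $L^1$ norm, this yields the uniform estimate. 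And (b) a frequency-localized Bernstein inequality on the Morrey scale. Throughout, $\lambda = q'N/p'$, and one checks that the target space in \eqref{em1} has Morrey parameter $(q'/\theta)N/(p'/\theta)=q'N/p'=\lambda$, so $\lambda$ is preserved under the $\theta$-rescaling; this is the consistency that makes \eqref{em1} dimensionally correct.

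Given (a), the embedding \eqref{em2} is immediate. For the right inclusion $M^{q',\lambda}\hookrightarrow N^0_{p',q',\infty}$, uniform boundedness gives $\sup_{j}\|\Delta_j f|M^{q',\lambda}\|\le C\|f|M^{q',\lambda}\|$, which is precisely the $r=\infty$ Besov--Morrey norm at $s=0$. For the left inclusion $N^0_{p',q',1}\hookrightarrow M^{q',\lambda}$, the triangle inequality in $M^{q',\lambda}$ applied to the reconstruction yields $\|f|M^{q',\lambda}\|\le \|\Delta_{(0)}f|M^{q',\lambda}\|+\sum_{j\ge1}\|\Delta_j f|M^{q',\lambda}\|$, and the right-hand side is exactly the $r=1$ Besov--Morrey norm; the $\ell^1$ summability also guarantees convergence of the series in $M^{q',\lambda}$.

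For \eqref{em1} the engine is the Bernstein inequality: for $g$ with $\mathrm{supp}\,\mathcal{F}g\subset\{|\xi|\le 2^{j+1}\}$ one has $\|g|M^{q'/\theta,\lambda}\|\le C\,2^{jN(1-\theta)/p'}\|g|M^{q',\lambda}\|$, the exponent being forced by the scaling dimension $\lambda$ of $M^{q',\lambda}$ (one computes $\lambda(1/q'-\theta/q')=N(1-\theta)/p'$). Applying this to $g=\Delta_j f$ and multiplying by the target weight $2^{(s-N(1-\theta)/p')j}$ gives $2^{(s-N(1-\theta)/p')j}\|\Delta_j f|M^{q'/\theta,\lambda}\|\le C\,2^{sj}\|\Delta_j f|M^{q',\lambda}\|$; taking $\ell^r$-norms in $j$, and treating the low-frequency piece $\Delta_{(0)}f$ separately, yields the continuity of \eqref{em1}.

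The main obstacle is establishing the Bernstein inequality on the Morrey scale, since $M^{q',\lambda}$ is neither reflexive nor rearrangement invariant and lacks the duality one exploits in the $L^p$ proof. The route I would take is the sub-mean-value bound for band-limited functions, $|g(x)|\le C\,(\mathcal{M}|g|^{q'})(x)^{1/q'}$ for $g$ with $\mathrm{supp}\,\mathcal{F}g$ in a ball of radius $2^{j+1}$ (with $\mathcal{M}$ the Hardy--Littlewood maximal operator and the constant scale-invariant after the usual rescaling), combined with the boundedness of $\mathcal{M}$ on the Morrey scale; this upgrades $q'$-integrability to $(q'/\theta)$-integrability with the stated gain. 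The endpoint $q'=1$, where $\mathcal{M}$ is unbounded on $M^{1,\lambda}$, needs the separate band-limited argument of Kozono--Yamazaki. Since all three facts are proved in \cite{KozoYama94}, the cleanest course for the present paper is to invoke \cite[Theorem 2.5, Proposition 2.11, and Theorem 3.1]{KozoYama94} directly, as stated.
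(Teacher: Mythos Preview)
Your proposal is correct and aligns with the paper's treatment: the paper does not prove this lemma at all but simply records it as a quotation from Kozono--Yamazaki \cite[Theorem~2.5, Proposition~2.11, and Theorem~3.1]{KozoYama94}, exactly the citation you arrive at in your final sentence. Your sketch of the Littlewood--Paley/Bernstein mechanism is a reasonable outline of how those results are obtained, but for the purposes of this paper the bare citation suffices.
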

	
	\begin{lemma}
		Let $s<\sigma$ and $1\le q' \le p' <\infty$. Then there exists a constant $C>0$ such that
		\begin{equation}
		\label{smooth}
		\|e^{t\Delta} f|{N^\sigma_{p',q',1}}\| \le C (1+t^{\frac{s-\sigma}{2}}) \|f|{N^s_{p',q',\infty}}\|
		\end{equation}
		for all $f\in N^s_{p',q',\infty} $ and $t>0$.
	\end{lemma}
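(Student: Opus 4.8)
The final statement is the Besov--Morrey smoothing estimate for the heat semigroup, so I would follow the classical Littlewood--Paley route, exploiting that $e^{t\Delta}$ acts as the Fourier multiplier $e^{-t|\xi|^2}$ and hence commutes with every frequency projection $\mathcal{F}^{-1}\varphi_j\mathcal{F}$. Writing $f_{(0)}:=\mathcal{F}^{-1}\varphi_{(0)}\mathcal{F}f$ and $f_j:=\mathcal{F}^{-1}\varphi_j\mathcal{F}f$, commutativity gives $\mathcal{F}^{-1}\varphi_j\mathcal{F}(e^{t\Delta}f)=e^{t\Delta}f_j$, so by the definition of the norm (with $\lambda=q'N/p'$)
\[
\|e^{t\Delta}f|N^\sigma_{p',q',1}\| = \|e^{t\Delta}f_{(0)}|M^{q',\lambda}\| + \sum_{j=1}^\infty 2^{\sigma j}\|e^{t\Delta}f_j|M^{q',\lambda}\|.
\]
The whole estimate is then reduced to a single frequency-localized bound for the heat semigroup on the Morrey space, followed by summation of the resulting dyadic series.

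The heart of the matter is the block estimate $\|e^{t\Delta}f_j|M^{q',\lambda}\|\le Ce^{-ct2^{2j}}\|f_j|M^{q',\lambda}\|$ for $j\ge1$, with the low-frequency block merely bounded since $e^{-t|\xi|^2}\le1$. To prove it I would fix a fattened cutoff $\tilde\varphi_j$ equal to $1$ on $\mathrm{supp}\,\varphi_j$ and supported in an annulus $\{c'2^j\le|\xi|\le C'2^j\}$, so that $e^{t\Delta}f_j=K_{j,t}*f_j$ with $K_{j,t}:=\mathcal{F}^{-1}(e^{-t|\xi|^2}\tilde\varphi_j)$. Two ingredients combine. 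First, convolution against an $L^1$ kernel is bounded on $M^{q',\lambda}$ with operator norm at most $\|K_{j,t}|L^1\|$, which follows at once from the translation invariance of $\|\cdot|M^{q',\lambda}\|$ together with Minkowski's integral inequality. Second, $\|K_{j,t}|L^1\|\le Ce^{-ct2^{2j}}$ uniformly in $j,t$: here I would factor $e^{-t|\xi|^2}\tilde\varphi_j(\xi)=e^{-ct2^{2j}}\,e^{-t(|\xi|^2-c2^{2j})}\tilde\varphi_j(\xi)$, rescale $\xi=2^j\eta$ (an operation leaving the $L^1$ norm of the inverse transform invariant), and set $\tau:=t2^{2j}$, reducing matters to $\sup_{\tau>0}\|\mathcal{F}^{-1}[e^{-\tau(|\eta|^2-c)}\psi_0]|L^1\|<\infty$, where $\psi_0$ is supported in $\{c'\le|\eta|\le C'\}$.

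For this uniform multiplier bound I would use the elementary estimate $\|\mathcal{F}^{-1}m|L^1\|\le C\sum_{|\beta|\le\lceil N/2\rceil+1}\|\partial^\beta m|L^2\|$, obtained from Cauchy--Schwarz against a weight $(1+|x|^2)^{-N/2-\varepsilon/2}$ and Plancherel, choosing $c<(c')^2$ so that $|\eta|^2-c\ge\delta>0$ on the support. Each derivative $\partial^\beta[e^{-\tau(|\eta|^2-c)}\psi_0]$ is then a finite sum of terms of the form $\tau^k(\text{bounded})\,e^{-\tau(|\eta|^2-c)}$, and since $\tau^k e^{-\tau\delta}$ is bounded uniformly in $\tau\ge0$, all the required $L^2$ norms stay bounded; this yields the block estimate. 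Inserting it and using $\|f|N^s_{p',q',\infty}\|=\|f_{(0)}|M^{q',\lambda}\|+\sup_{j\ge1}2^{sj}\|f_j|M^{q',\lambda}\|$ gives
\[
2^{\sigma j}\|e^{t\Delta}f_j|M^{q',\lambda}\|\le C\,2^{(\sigma-s)j}e^{-ct2^{2j}}\,\|f|N^s_{p',q',\infty}\|,
\]
so everything reduces to estimating $\sum_{j\ge1}2^{(\sigma-s)j}e^{-ct2^{2j}}$. Since $\sigma-s>0$, comparing the sum with $\int_1^\infty x^{\sigma-s}e^{-ctx^2}\,\tfrac{dx}{x}$ and substituting $u=ctx^2$ bounds it by $C(1+t^{-(\sigma-s)/2})=C(1+t^{(s-\sigma)/2})$, where the constant covers the large-$t$ regime and the power covers the small-$t$ one. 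Adding the bounded low-frequency contribution yields the claimed inequality.

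The step I expect to be the main obstacle is the uniform kernel bound $\sup_{\tau>0}\|\mathcal{F}^{-1}[e^{-\tau(|\eta|^2-c)}\psi_0]|L^1\|<\infty$. Cleanly isolating the decay factor $e^{-ct2^{2j}}$ relies on the geometric fact that the annular support keeps $|\eta|^2-c$ bounded below by a positive constant, and one must verify that the derivative bounds survive as $\tau\to\infty$ — precisely the point where the polynomial growth $\tau^k$ is dominated by the Gaussian decay $e^{-\tau\delta}$. Everything else (commutation with projections, the Young-type inequality on Morrey spaces, and the dyadic summation) is routine once this localized estimate is in hand.
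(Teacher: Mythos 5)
Your proof is correct, but note that the paper itself contains no proof of this lemma: it is imported verbatim from Kozono--Yamazaki (the paper explicitly states that the two lemmas preceding Proposition \ref{MpMq} "have been proved by Kozono--Yamazaki [Theorem 2.5, Proposition 2.11, and Theorem 3.1]"). So your Littlewood--Paley argument is necessarily a different route: it reconstructs the cited result rather than paralleling anything in the paper. The reconstruction is sound. The semigroup $e^{t\Delta}$ commutes with the projections $\mathcal{F}^{-1}\varphi_j\mathcal{F}$ since both are Fourier multipliers; your Young-type inequality on the local Morrey space is legitimate because the norm $\sup_{z}\sup_{0<R\le 1}R^{(\lambda-N)/q'}\|\cdot\|_{L^{q'}(B(z,R))}$ only shifts the ball center under translation, so Minkowski's integral inequality gives operator norm at most $\|K_{j,t}\|_{L^1}$; the uniform bound $\sup_{\tau>0}\|\mathcal{F}^{-1}[e^{-\tau(|\eta|^2-c)}\psi_0]\|_{L^1}<\infty$ holds exactly as you argue, since $c<(c')^2$ keeps $|\eta|^2-c\ge\delta>0$ on the compact annular support, so each Leibniz term $\tau^k(\mathrm{bounded})e^{-\tau(|\eta|^2-c)}$ stays bounded in $L^2$ uniformly in $\tau$, and the weighted Cauchy--Schwarz/Plancherel reduction to finitely many $L^2$ norms of derivatives is standard; finally, the dyadic sum $\sum_{j\ge1}2^{(\sigma-s)j}e^{-ct2^{2j}}$ is indeed $O\bigl(1+t^{(s-\sigma)/2}\bigr)$, and your sum-to-integral comparison is valid because on each block $[2^{j-1},2^j]$ one has $e^{-ctx^2}\ge e^{-ct4^j}$ and $x^{\sigma-s}\ge 2^{-(\sigma-s)}2^{(\sigma-s)j}$, so the summand is dominated by a constant times $\int_{2^{j-1}}^{2^j}x^{\sigma-s}e^{-ctx^2}\,\frac{dx}{x}$. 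What your write-up buys is self-containedness and a transparent view of where the factor $t^{(s-\sigma)/2}$ comes from (high frequencies at small time); what the citation buys the authors is brevity and simultaneous coverage of the accompanying embedding lemma, which your argument does not (and need not) address.
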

	
	\begin{proof}[Proof of Proposition \ref{MpMq}.]
		First, we prove \eqref{MPMQ1} in the case of $p=q$.
		Let $\mu \in M^{p,\lambda}$, $z\in {\bf R}^N$, and $R\in(0,1]$. 
		By \eqref{def:Morrey} and the Jensen inequality, we have 
		\begin{equation*}
		\begin{split}
		\int_{B(z;R)} |e^{t\Delta} \mu (x)|^p\, dx
		& = \int_{B(z;R)}\left|\int_{{\bf R}^N} G(x-y,t) \mu(y)\,dy\right|^p\, dx\\
		& \le\int_{B(z;R)} \int_{{\bf R}^N} G(x-y,t) |\mu(y)|^p\,dydx\\
		& =   \int_{B(z;R)}\int_{{\bf R}^N} G(y,t) |\mu(x-y)|^p\,dydx\\
		&= \int_{{\bf R}^N} G(y,t) \int_{B(z;R)}|\mu(x-y)|^p\,dxdy\\
		&\le R^{N-\lambda} \|\mu|M^{p,\lambda}\|^p \int_{{\bf R}^N} G(y,t)\,dy\\
		&=  R^{N-\lambda} \|\mu|M^{p,\lambda}\|^p.
		\end{split}
		\end{equation*}
		Since $z\in{\bf R}^N$ and $R\in(0,1]$ are arbitrary, we have
		\[
		\|e^{t\Delta} \mu (x)|M^{p,\lambda}\| \le  \|\mu|M^{p,\lambda}\|,
		\]
		whence follows \eqref{MPMQ1} with $p=q$.

		Next, we prove \eqref{MPMQ1} with $p>q$.
		By \eqref{em2}  with 
		\[
		p'= \frac{pN}{\lambda} \quad \mbox{and} \quad q' = p,
		\]
		%($A\to B$ means "substituting $B$ into $A$"),
		we have 
		\begin{equation}
		\label{eq:em4}
		\begin{split}
		\|e^{t\Delta} \mu|M^{p,\lambda}\|
		& \le C \left\|e^{t\Delta} \mu |{N^0_{\frac{pN}{\lambda}, p,1}}\right\|\\
		\end{split}
		\end{equation}
		for all $t>0$. Note that
		\[
		N^{0}_{\frac{pN}{\lambda}, p,1} = 
		N^{N(1-\frac{q}{p})\frac{\lambda}{qN}-N(1-\frac{q}{p})\frac{\lambda}{qN}}_{\frac{qN}{\lambda}\frac{p}{q}, q \frac{p}{q},1}.
		\]
		By \eqref{em1} with
		\[
		\theta = \frac{q}{p}\in(0,1), \quad p'=\frac{qN}{\lambda}, \quad q'= q, \quad r=1,\quad s=N\left(1-\frac{q}{p}\right)\frac{\lambda}{qN},
		\]
		we have
		\begin{equation}
		\label{eq:em5}
		\left\|e^{t\Delta} \mu|{N^0_{\frac{pN}{\lambda}, p,1}}\right\| \le C \left\|e^{t\Delta} \mu|{N^{\lambda(\frac{1}{q}-\frac{1}{p})}_{\frac{qN}{\lambda}, q,1}}\right\|
		\end{equation}
		for all $t>0$.
		By  \eqref{em2} and \eqref{smooth} with 
		\[
		p'= \frac{qN}{\lambda}, \quad q'= q, \quad s=0, \quad \sigma = \lambda\left(\frac{1}{q}-\frac{1}{p}\right)>0,
		\]
		\eqref{eq:em4}, and \eqref{eq:em5}, we have 
		\begin{equation*}
		%\label{eq:3.5}
		\begin{split}
		\|e^{t\Delta} \mu|M^{p,\lambda}\| \le C  \left\|e^{t\Delta} \mu|{N^{\lambda(\frac{1}{q}-\frac{1}{p})}_{\frac{qN}{\lambda}, q,1}}\right\|
		&\le C (1+t^{-\frac{\lambda}{2}(\frac{1}{q}-\frac{1}{p})})\left\| \mu|{N^{0}_{\frac{qN}{\lambda}, q,\infty}}\right\|\\
		&\le C (1+t^{-\frac{\lambda}{2}(\frac{1}{q}-\frac{1}{p})})\| \mu|M^{q,\lambda}\|\\
		\end{split}
		\end{equation*}
		for all $t>0$. Thus, we arrive at \eqref{MPMQ1}.
	\end{proof}
	%%%%%%%%%%%%%%%%%%%%%%%%%%%%%%%%%%%%%
	\subsection{Proof of Theorem \ref{Thm:1.2}.}
	%%%%%%%%%%%%%%%%%%%%%%%%%%%%%%%%%%%%%
	
	In this subsection, we give a proof of Theorem~\ref{Thm:1.2}. 
	\begin{proof}[Proof of Theorem \ref{Thm:1.2}.]
	Let $1<p\le p_F$.
	The proof is based on the contraction mapping theorem.
		It suffices to consider the case of $T=1$. Indeed, for any solution $u$ of \eqref{TFF} in $[0,T)\times{\bf R}^N$, where $T\in(0,\infty)$, we see that $u_\lambda (x,t) := \lambda^{2\alpha/(p-1)}u(\lambda^2 t , \lambda^\alpha x)$ with $\lambda:= \sqrt{T}$ is a solution of \eqref{TFF} in $[0,1)\times{\bf R}^N$.
		Then \eqref{Thm:1.2.1} reads
		\begin{equation}\label{eq. cond. of thm1.2 with T=1}
			\sup_{z\in{\bf R}^N}\mu(B(z;1))\le \gamma_2,
		\end{equation}
		where $\gamma_2$ is the constant as in Theorem~\ref{Thm:1.2}.
		Set
		\[X :=L^\infty((0,1);L^{p}_{{\rm uloc}}({\bf R}^N))
		\]
		and
		\[
		\|u|X\|:= \sup_{t\in(0,1)} t^\beta \|u(t)|L^p_{{\rm uloc}}\|, \quad \beta := \frac{N\alpha}{2p}(p-1) \le \frac{\alpha}{p}.
		\]
		For $u\in X$, define
		\[
		\Phi[u](t):= P_\alpha(t)\mu +\alpha\int_0^t (t-s)^{\alpha-1} S_\alpha(t-s)|u(s)|^p \,ds.
		\]
		Denote $B_M := \{u\in X;\|u|X\|\le M\}$ for $M>0$, which will be fixed later, and set 
		\[
		d(u,v) := \|u-v|X\|  \qquad \forall u, v \in B_M.
		\]
		 Then $(B_M,d)$ becomes a complete metric space.
		   We are going to show below that $\Phi$ maps $B_M$ into itself for all $t \in (0,1)$.
		   
           Take any $u\in B_M$ and $t\in (0,1)$, then since
		\[
		-\frac{N}{2}\left(1-\frac{1}{p}\right)>-1
		\]
		for $1<p\le p_F$, \eqref{eq:1.4} yields
		\[
		\int_0^\infty h_\alpha(\theta) (1+\theta^{-\frac{N}{2}(1-\frac{1}{p})} )\, d\theta= 1+\frac{\Gamma\left(1-\frac{N}{2}(1-\frac{1}{p})\right)}{\Gamma\left(1-\frac{N\alpha}{2}(1-\frac{1}{p})\right)} <\infty
		\]
		and
		\[
		\int_0^\infty h_\alpha(\theta) (\theta+\theta^{1-\frac{N}{2}(1-\frac{1}{p})} )\, d\theta=\frac{\Gamma\left(2\right)}{\Gamma\left(1+\alpha\right)}+\frac{\Gamma\left(2-\frac{N}{2}(1-\frac{1}{p})\right)}{\Gamma\left(1+\alpha-\frac{N\alpha}{2}(1-\frac{1}{p})\right)} <\infty.
		\]
		Note that the constants above are uniformly bounded near $\alpha=1$, and that $\|f^p|L^1_{{\rm uloc}}\| =\|f|L^p_{{\rm uloc}}\|^p$ for all $f\in L^p_{{\rm uloc}}({\bf R}^N)$.
		It follows from Proposition \ref{LpLq} and \eqref{eq. cond. of thm1.2 with T=1} that
		\begin{equation*}
		\begin{split}
		t^\beta \|P_\alpha (t) \mu |L^p_{{\rm uloc}}\| 
		&\le t^\beta \int_0^\infty h_\alpha(\theta) \|e^{t^\alpha \theta \Delta}\mu|L^p_{{\rm uloc}}\| \, d\theta\\
		&\le C \|\mu|M^1\|t^\beta\int_0^\infty h_\alpha(\theta )(1+(t^\alpha \theta)^{-\frac{N}{2}(1-\frac{1}{p})})\, d\theta \\
		&\le C\|\mu|M^1\|t^{\beta-\frac{N\alpha}{2}(1-\frac{1}{p})} \int_0^\infty h_\alpha(\theta )(1+\theta^{-\frac{N}{2}(1-\frac{1}{p})})\, d\theta \\
		&\le C_1 \gamma_2
		\end{split}
		\end{equation*}
		for $0<t<1$. Similarly, by \eqref{MPMQ lambdaN},
		\begin{equation*}
		\begin{split}
		&t^\beta \left\| \alpha\int_0^t (t-s)^{\alpha-1} S_\alpha(t)|u(s)|^p\,ds |L^p_{{\rm uloc}}\right\| \\
		&\le \alpha t^\beta \int_0^t (t-s)^{\alpha-1} \|S_\alpha(t)|u(s)|^p|L^p_{{\rm uloc}}\|\,ds \\
		&\le C  t^\beta  \int_0^t (t-s)^{\alpha-1} \int_0^\infty \theta h_\alpha(\theta) (1+[(t-s)^\alpha \theta]^{-\frac{N}{2}(1-\frac{1}{p})})\||u(s)|^p|L^1_{{\rm uloc}}\| \,d\theta ds\\
		&\le CM^pt^{\alpha(1-\frac{N}{2}(p-1))} B\left(\alpha-\frac{N\alpha}{2}\left(1-\frac{1}{p}\right), 1-\frac{N\alpha}{2}(p-1)\right)\\
		&\le C_2M^pB\left(\alpha-\frac{N\alpha}{2}\left(1-\frac{1}{p}\right), 1-\frac{N\alpha}{2}(p-1)\right)
		\end{split}
		\end{equation*}
		for $0<t<1$.  
		Note that $C_1$ and $C_2$ are independent of $\alpha$. For $c_*>0$, set
		\begin{equation}\label{gamma_2}
		\gamma_2= c_* B\left(\alpha-\frac{N\alpha}{2}\left(1-\frac{1}{p}\right), 1-\frac{N\alpha}{2}(p-1)\right)^{-\frac{1}{p-1}}
		\end{equation}
		and
		\[
		M= 2C_1 c_*B\left(\alpha-\frac{N\alpha}{2}\left(1-\frac{1}{p}\right), 1-\frac{N\alpha}{2}(p-1)\right)^{-\frac{1}{p-1}}.
		\]
		Taking  a sufficiently small $c_*>0$ if necessary, we have by the above inequalities
		\begin{equation*}
		%\label{sub:2}
		\begin{split}
		\|\Phi[u]|X\| 
		&= \sup_{t\in(0,1)} t^\beta\|\Phi[u](t)|L^p_{{\rm uloc}}\| \\
		&\le (C_1c_*+ 2^pC_1^pC_2c_*^p)B\left(\alpha-\frac{N\alpha}{2}\left(1-\frac{1}{p}\right), 1-\frac{N\alpha}{2}(p-1)\right)^{-\frac{1}{p-1}} \\
		&\le 2C_1c_*B\left(\alpha-\frac{N\alpha}{2}\left(1-\frac{1}{p}\right), 1-\frac{N\alpha}{2}(p-1)\right)^{-\frac{1}{p-1}} = M.
		\end{split}
		\end{equation*}
		This implies that $\Phi(B_M) \subset B_M$. 
		
		Next, we show that $\Phi: B_M \to B_M$ is a contraction mapping. 
		Take any $u,v\in B_M$, then by manipulations similar to those above, we have
		\begin{equation}
		\label{sub:4}
		\begin{split}
		& t^\beta \|\Phi[u](t)-\Phi[v](t)|L^p_{{\rm uloc}}\|\\
		&\le \alpha t^\beta \int_0^t (t-s)^{\alpha-1} \int_0^\infty \theta h_\alpha(\theta) \|e^{(t-s)^\alpha \theta \Delta}(|u(s)|^p-|v(s)|^p)|L^p_{{\rm uloc}}\| \, d\theta ds\\
		&\le  C t^\beta \int_0^t (t-s)^{\alpha-1-\frac{N\alpha}{2}(1-\frac{1}{p})}  \||u(s)|^p-|v(s)|^p|L^1_{{\rm uloc}}\| \, ds\\
		&\qquad\qquad\qquad\times \int_0^\infty  h_\alpha(\theta)(\theta+ \theta^{1-\frac{N}{2}(1-\frac{1}{p})}) \,d\theta\\
		&\le  C t^\beta \int_0^t (t-s)^{\alpha-1-\frac{N\alpha}{2}(1-\frac{1}{p})}  (\|u(s)|L^p_{{\rm uloc}}\|^{p-1} + \|v(s)|L^p_{{\rm uloc}}\|^{p-1})\\
		&\qquad\qquad\qquad\times \|u(s)-v(s)|L^p_{{\rm uloc}}\| \, ds\\
		&\le  C t^\beta  M^{p-1}\int_0^t (t-s)^{\alpha-1-\frac{N\alpha}{2}(1-\frac{1}{p})} s^{-p\beta}\, ds \, d(u,v)\\
		&\le C_3M^{p-1}t^{\alpha(1-\frac{N}{2}(p-1))}B\left(\alpha-\frac{N\alpha}{2}\left(1-\frac{1}{p}\right), 1-\frac{N\alpha}{2}(p-1)\right)d(u,v)
		\end{split}
		\end{equation}
		for $0<t<1$. Note that $C_3$ is independent of $\alpha$. 
		Taking a sufficiently small $c_*>0$ if necessary, we have by \eqref{sub:4}
		\[
		d(\Phi[u], \Phi[v]) \le 2^{p-1} C_1^{p-1}C_3 c_*^{p-1} d(u,v) \le \frac{1}{2} d(u,v)
		\] 
		for $u,v\in B_M$. Note that $c_*$ is independent of $\alpha$.
		This implies that $\Phi: B_M \to B_M$ is a contraction mapping. 
		Therefore, by the contraction mapping theorem, we know $\Phi$ has a unique fixed point $u\in B_M$. 
		Obviously, if $\mu$ is a nonnegative Radon measure on ${\bf R}^N$, we see that $u(x,t)\ge0$ for almost all $t\in(0,1)$ and $x\in{\bf R}^N$.
		This implies that $u$ is a solution of problem \eqref{TFF} in $[0,1)\times{\bf R}^N$. Furthermore \eqref{est:gamma2:sub} easily follows from \eqref{gamma_2}.
		
		Finally, let $p=p_F$. By the property of the Beta function:
		\[
		B(x,y)=\frac{\Gamma(x)\Gamma(y)}{\Gamma(x+y)} \quad \mbox{for all} \quad x,y>0
		\]
		and the Gamma function:
		\[
		\Gamma(1+x)=x\Gamma(x) \quad \mbox{for all}\quad x>0,
		\]
		we obtain
		\[
		\begin{aligned}
		B\left( \alpha -\frac{N\alpha}{2}\left(1-\frac{1}{p}\right), 1-\alpha \right)
		&=\frac{\Gamma\left( \alpha -\frac{\alpha N}{2}\left( 1-\frac{1}{p} \right) \right)}{\Gamma\left( 1 -\frac{\alpha N}{2}\left( 1-\frac{1}{p} \right) \right)} \Gamma(1-\alpha)\\
		&=\frac{\Gamma\left( \alpha -\frac{\alpha}{p} \right)\Gamma(2-\alpha) }{\Gamma\left(1 -\frac{\alpha}{p}\right)}
		(1-\alpha)^{-1},
		\end{aligned}
		\]
		since $N(p-1)/2=1$ if $p=p_F$. Combining this equality with \eqref{gamma_2} and $p=p_F$ yields 
           \eqref{est:gamma2:critical}. 
	\end{proof}
	
	%%%%%%%%%%%%%%%%%%%%%%%%%%%%%%%%%%%%%
	\subsection{ Proof of Theorem \ref{Thm:1.3}.}
	%%%%%%%%%%%%%%%%%%%%%%%%%%%%%%%%%%%%%
	 Here, we give a proof of Theorem~\ref{Thm:1.3}. 

%%%%%%%%%%%%%%%%%%%%%%%%%%%%%%%%%%%%%%%%%%%%%%%%%%%%%%%%%%%%%%%%%%%%%%%%%%%%%%%%%%%%%%
%%%%%%%%   Proof of Theorem 1.3.     %%%%%%%%%%%%%%%%%%%%%%%%%%%%%%%%%%%%%%%%%%%%%%%%%
%%%%%%%%%%%%%%%%%%%%%%%%%%%%%%%%%%%%%%%%%%%%%%%%%%%%%%%%%%%%%%%%%%%%%%%%%%%%%%%%%%%%%%	
	\begin{proof}[Proof of Theorem \ref{Thm:1.3}.]
	Let $p>p_F$.
Assume \eqref{Thm:1.3.1}. Without loss of generality, we can assume  that
 \begin{equation}
 \label{assumption:r}
 r\in (1,p) \quad \mbox{and} \quad \frac{2r}{p-1} < N.
 \end{equation}
Indeed, if $r>1$ does not satisfy \eqref{assumption:r}, then, for any $r'\in(1,r)$ satisfying \eqref{assumption:r}, we apply the H\"{o}lder inequality to obtain
\begin{equation*}
\begin{split}
\sup_{z\in {\bf R}^N} \int_{B(z;\sigma)} \mu(y)^{r'} \, dy 
&\le \sup_{z\in{\bf R}^N} \left[\int_{B(z;\sigma)} \,dy\right]^{1-\frac{r'}{r}} \left[\int_{B(z;\sigma)} \mu(y)^r\, dy\right]^\frac{r'}{r}\\
& \le C\gamma_3^{r'} \sigma^{N-\frac{2 r'}{p-1}}
\end{split}
\end{equation*}
for all $\sigma \in (0,T^{\alpha/2})$.
Then \eqref{Thm:1.3.1} holds with $r$ replaced by $r'$.
	Similarly to the proof of Theorem~\ref{Thm:1.2}, it suffices to consider the case of $T=1$, because if $u$ is a solution of problem \eqref{TFF} in $[0,T)\times{\bf R}^N$, we see that $u_\lambda (x,t) := \lambda^{2\alpha/(p-1)}u(\lambda^2 t , \lambda^\alpha x)$ with $\lambda:= \sqrt{T}$ is a solution of problem \eqref{TFF} in $[0,1)\times{\bf R}^N$.  Hence, in view of \eqref{def:Morrey}, we see that \eqref{Thm:1.3.1} is equivalent to
		\begin{equation}\label{super:1}
		\|\mu|M^{r,\lambda}\|\le \gamma_3, \qquad \lambda := {2r/(p-1)}.
		\end{equation}
		Since $r\in (1,p)$, we can take $q\in(p, \infty)$ so that
		\[
		1<\frac{q}{p} < r <q.
		\]
		Set 
		\[
		Y:= L^\infty((0,1); M^{q,\lambda})
		\]
		and
		\[
		\|u|Y\| := \sup_{t\in(0,1)} t^\beta \|u |M^{q,\lambda}\|, \quad \beta:= \frac{\alpha}{p-1}\left(1-\frac{r}{q}\right) <\frac{\alpha}{p}.
		\]
		From here, for simplicity of notation, we set $|\cdot|_{q} := \|\cdot|M^{q,\lambda}\|$.  For $u\in Y$, define
		\[
		\Phi[u](t):= P_\alpha(t)\mu +\alpha\int_0^t (t-s)^{\alpha-1} S_\alpha(t-s)|u(s)|^p \,ds.
		\]
		Denote $B_M := \{u\in Y;\|u|Y\|\le M\}$ for $M>0$, which will be fixed later, 
          and set 
		\[
		d(u,v) := \|u-v|Y\| \qquad u, v \in B_M.
		\]
		Then $(B_M,d)$ becomes a complete metric space.
		 We are going to show below that $\Phi$ maps $B_M$ into itself for all $t \in (0,1)$.
		Since 
		\[
		-\frac{1}{p-1}\left(1-\frac{r}{q}\right)>-1, \quad 1-\frac{r}{q}>0,
		\]
		by \eqref{eq:1.4} we have
		\[
		\int_0^\infty (1+\theta^{-\frac{1}{p-1}(1-\frac{r}{q})})h_{\alpha}(\theta)\, d\theta= 1+\frac{\Gamma\left(1-\frac{1}{p-1}(1-\frac{r}{q})\right)}{\Gamma\left(1-\frac{\alpha }{p-1}(1-\frac{r}{q})\right)}<\infty
		\]
		and
		\[
		\int_0^\infty (\theta +\theta^{1-\frac{r}{q}})h_{\alpha}(\theta)\, d\theta = \frac{\Gamma(2)}{\Gamma(1+\alpha)}+\frac{\Gamma\left(2-\frac{r}{q}\right)}{\Gamma\left(1+\alpha\left(1-\frac{r}{q}\right)\right)}<\infty.
		\]
	     Note that the constants above are uniformly bounded near $\alpha=1$, and	
           that $|f^p|_{q/p} = |f|_q^p$ for all $f\in M^{q,\lambda}({\bf R}^N)$. 
       Let $u\in B_M$ and $t\in (0,1]$. 
		It follows from Proposition~\ref{MpMq} and \eqref{super:1} that
		\begin{equation*}
		\begin{split}
		t^\beta |P_\alpha (t) \mu |_q
		&\le t^\beta \int_0^\infty h_\alpha(\theta) |e^{t^\alpha \theta \Delta}\mu |_q\, d\theta ds\\
		&\le C |\mu|_r\int_0^\infty h_\alpha(\theta )(1+\theta^{-\frac{1}{p-1}(1-\frac{r}{q})})\, d\theta \\
		&\le C_1\gamma_3
		\end{split}
		\end{equation*}
		for $0<t<1$. Moreover, by Proposition~\ref{MpMq},
		\begin{equation*}
		\begin{split}
		&t^\beta \left| \alpha\int_0^t (t-s)^{\alpha-1} S_\alpha(t-s)|u(s)|^p\,ds \right|_q \\
		&=  \alpha t^\beta \int_0^t (t-s)^{\alpha-1} \int_0^\infty \theta h_\alpha(\theta)|e^{(t-s)^\alpha \theta \Delta}|u(s)|^p|_q  \,d\theta ds\\
		&\le   C  t^\beta \int_0^t (t-s)^{(1-\frac{r}{q})\alpha-1} |u(s)|_q^p  \,ds \int_0^\infty  h_\alpha(\theta) (\theta +\theta^{1-\frac{r}{q}}) \,d\theta \\
		&\le CM^p t^\beta\int_0^t(t-s)^{(1-\frac{r}{q})\alpha-1}s^{-p\beta} \, ds\\
		&\le C_2M^pB\left(\alpha\left(1-\frac{r}{q}\right), 1-\frac{p\alpha}{p-1}\left(1-\frac{r}{q}\right) \right)\\
		\end{split}
		\end{equation*}
		for $0<t<1$. 
		Note that $C_1$ and $C_2$ are independent of $\alpha$.
		For $c_*>0$, set
		\begin{equation}
		\gamma_3 := c_* B\left(\alpha\left(1-\frac{r}{q}\right), 1-\frac{p\alpha}{p-1}\left(1-\frac{r}{q}\right) \right)^{-\frac{1}{p-1}}
		\end{equation}
		and
		\[
		M:= 2C_1c_*B\left(\alpha\left(1-\frac{r}{q}\right), 1-\frac{p\alpha}{p-1}\left(1-\frac{r}{q}\right) \right)^{-\frac{1}{p-1}}.
		\]
		Taking a sufficiently small $c_*>0$ if necessary, we have by the above inequalities
		\begin{equation*}
		\begin{split}
		\|\Phi[u]|Y\|
		&=\sup_{t\in(0,1)}t^\beta |\Phi[u](t)|_q \\
		&\le (C_1c_*+2^pC_1^pC_2c_*^p)B\left(\alpha\left(1-\frac{r}{q}\right), 1-\frac{p\alpha}{p-1}\left(1-\frac{r}{q}\right) \right)^{-\frac{1}{p-1}}\\
		& \le 2C_1c_*B\left(\alpha\left(1-\frac{r}{q}\right), 1-\frac{p\alpha}{p-1}\left(1-\frac{r}{q}\right) \right)^{-\frac{1}{p-1}}=M.
		\end{split}
		\end{equation*}
		This implies that $\Phi (B_M) \subset  B_M$.
		
		Next, we show that $\Phi: B_M \to B_M$ is a contraction mapping. Take any $u,v\in B_M$, then by manipulations similar to those above, 
		 we have
		\begin{equation}
		\label{super:3}
		\begin{split}
		& t^\beta |\Phi[u](t)-\Phi[v](t)|_q \\
		&\le \alpha t^\beta \int_0^t (t-s)^{\alpha-1} \int_0^\infty \theta h_\alpha(\theta) |e^{(t-s)^\alpha \theta \Delta}(|u(s)|^p-|v(s)|^p)|_q  \, d\theta ds\\
		&\le \alpha t^\beta \int_0^t (t-s)^{\alpha-1} \int_0^\infty \theta h_\alpha(\theta) 
		(1+[(t-s)^\alpha \theta]^{-\frac{r}{q}})|(|u(s)|^p-|v(s)|^p)|_\frac{q}{p}  \, d\theta ds\\
		&\le \alpha t^\beta\int_0^t (t-s)^{(1-\frac{r}{q})\alpha-1} |(|u(s)|^p-|v(s)|^p)|_\frac{q}{p}  \,  ds
		\int_0^\infty  h_\alpha(\theta) 
		(\theta+\theta^{1-\frac{r}{q}})\,d\theta \\
		&\le  C t^\beta \int_0^t (t-s)^{(1-\frac{r}{q})\alpha-1}  ||u(s)|^p-|v(s)|^p|_\frac{q}{p} \, ds\\
		&\le  C t^\beta \int_0^t (t-s)^{(1-\frac{r}{q})\alpha-1}  (|u(s)|_q^{p-1} + |v(s)|_q^{p-1})|u(s)-v(s)|_q  \, ds\\
		&\le  C  M^{p-1}t^\beta  \int_0^t (t-s)^{(1-\frac{r}{q})\alpha-1} s^{-p\beta}\, ds \, d(u,v)\\
		&\le C_3 M^{p-1} B\left(\alpha\left(1-\frac{r}{q}\right), 1-\frac{p\alpha}{p-1}\left(1-\frac{r}{q}\right) \right)d(u,v)
		\end{split}
		\end{equation}
		for $0<t<1$. 
		Note that $C_3$ is independent of $\alpha$.
		Taking a sufficiently small $c_*>0$ if necessary, we get by \eqref{super:3}
		\[
		d(\Phi[u], \Phi[v]) \le 2^{p-1}C_1^{p-1}C_3c_*^{p-1} d(u,v) \le \frac{1}{2} d(u,v)
		\] 
		for $u,v\in B_M$. 
		Note that $c_*$ is independent of $\alpha$.
		This implies that $\Phi: B_M \to B_M$ is a contraction mapping. 
		Therefore, by the contraction mapping theorem, we know $\Phi$ has a unique fixed point $u\in B_M$. 
		Obviously, if $\mu$ is a nonnegative measurable function in ${\bf R}^N$, we see that $u(x,t)\ge0$ for almost all $t\in[0,1)$ and $x\in{\bf R}^N$.
		Thus, $u$ is a  solution of problem \eqref{TFF} in $(0,1)\times{\bf R}^N$ 
         and Theorem~\ref{Thm:1.3} is proved.
	\end{proof}

	%%%%%%%%%%%%%%%%%%%%%%%%%%%%%%%%%%%%%
	\section{Collapse of the solvability}\label{section. Collapse of the solvability}
	%%%%%%%%%%%%%%%%%%%%%%%%%%%%%%%%%%%%%
	
	%%%%%%%%%%%%%%%%%%%%%%%%%%%%%%%%%%%%%
	\subsection{Preliminaries.}
	%%%%%%%%%%%%%%%%%%%%%%%%%%%%%%%%%%%%%
	
	First, we define the life span of solutions of problem \eqref{TFF}. For this purpose, we recall the following result given in \cite{GMS22}, which is based on \cite{RobSier2013}. See also \cite{HisaIshige18, HIT18, IshiKawaSier2016, LuchYama19}.
	\begin{proposition}
		\label{Prop:min}
		Let $T\in(0,\infty]$.
		If problem \eqref{TFF} possesses a supersolution in $[0,T)\times{\bf R}^N$, then problem \eqref{TFF} possesses a solution $\overline{u}$ in $[0,T)\times{\bf R}^N$ in the sense of Definition {\rm \ref{Def:1.1}}.
		Furthermore, $\overline{u}$ satisfies
		\begin{equation}
		\label{min}
		0\le \overline{u}(x,t) \le v(x,t) \quad \mbox{for a.a.} \quad t \in (0,T), \, x\in{\bf R}^N
		\end{equation}
		for all solutions $v$ of  problem \eqref{TFF} in $[0,T)\times{\bf R}^N$.
	\end{proposition}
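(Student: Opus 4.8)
The plan is to construct $\overline{u}$ as the increasing limit of a monotone iteration, exploiting that the solution operator is order-preserving on nonnegative functions. First I would record the monotonicity that drives everything: since the heat kernel $G$ is strictly positive and $h_\alpha>0$, the operators $P_\alpha(t)$ and $S_\alpha(t)$ map nonnegative functions to nonnegative functions and respect the pointwise order; combined with the monotonicity of $\tau\mapsto\tau^p$ on $[0,\infty)$, this shows that
\[
\Phi[u](t):= P_\alpha(t)\mu +\alpha\int_0^t (t-s)^{\alpha-1} S_\alpha(t-s)u(s)^p \,ds
\]
satisfies $0\le\Phi[u]\le\Phi[v]$ a.e.\ whenever $0\le u\le v$ a.e.

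Next I would set up the iteration. Write $w$ for the given supersolution, so $w\ge\Phi[w]$ a.e., and put $u_0:=P_\alpha(t)\mu$ and $u_{n+1}:=\Phi[u_n]$. I claim $0\le u_n\le u_{n+1}\le w$ for every $n$, which I would prove by induction. For the base step, $u_0\le w$ follows from the supersolution inequality by discarding the nonnegative integral term, while $u_0\le u_1=\Phi[u_0]$ holds because $u_1-u_0$ is precisely that nonnegative term. For the inductive step, from $u_{n-1}\le u_n\le w$ the monotonicity of $\Phi$ gives $u_n=\Phi[u_{n-1}]\le\Phi[u_n]=u_{n+1}$ and $u_{n+1}=\Phi[u_n]\le\Phi[w]\le w$, the last inequality being the supersolution property.

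With $\{u_n\}$ pointwise nondecreasing and dominated by the finite function $w$, I would define $\overline{u}:=\lim_{n\to\infty}u_n$, so that $0\le\overline{u}\le w<\infty$ a.e. The monotone convergence theorem then permits passing to the limit inside the integrals over $\theta$, over ${\bf R}^N$, and over $s$ that make up $\Phi[u_n]$, yielding $\overline{u}=\Phi[\overline{u}]$; thus $\overline{u}$ is a solution in the sense of Definition~\ref{Def:1.1}. For the minimality statement \eqref{min}, I would note that any solution $v$ of \eqref{TFF} is in particular a supersolution, so the same induction run against $v$ (using $u_0=P_\alpha(t)\mu\le v$ and $u_{n+1}=\Phi[u_n]\le\Phi[v]=v$) gives $u_n\le v$ for all $n$, whence $\overline{u}=\lim_n u_n\le v$.

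The step I expect to demand the most care is the limit exchange producing $\overline{u}=\Phi[\overline{u}]$: one must verify that each $u_n$ is measurable and finite a.e., and that $\overline{u}(s)^p$ is integrable against the kernels appearing in $\Phi$ so that $\Phi[\overline{u}]$ is well defined and finite. Both points rest on the uniform domination $u_n\le w$ together with Tonelli's theorem, and it is exactly here that the hypothesis of an existing supersolution is indispensable, since without the a priori bound by $w$ the iterates could blow up and the monotone limit need not solve the equation.
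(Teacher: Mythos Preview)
Your argument is correct and is precisely the standard monotone-iteration construction of a minimal solution. The paper, however, does not give its own proof of this proposition: it merely records the statement and attributes it to \cite{GMS22}. So there is no ``paper's proof'' to compare against in detail; your write-up essentially supplies what the cited reference would contain, and the approach you take (iterating $\Phi$ from $u_0=P_\alpha(t)\mu$, sandwiching the iterates between $u_0$ and the given supersolution, and passing to the limit by monotone convergence) is the expected one for results of this type.
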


	We call the solution $\overline{u}$ satisfying \eqref{min} the minimal solution of problem \eqref{TFF}.
	Since the minimal solution is unique, we can define the life span of solutions of problem \eqref{TFF} as
	\[
	T_\alpha\left[ \mu \right]:=\sup\{T'>0 ; \mbox{the minimal solution of \eqref{TFF} exists in} \,\, [0,T')\times{\bf R}^N\}.
	\]
	By virtue of Proposition \ref{Prop:min}, we can regard the solutions which were obtained in Theorems~\ref{Thm:1.2} and \ref{Thm:1.3} as the minimal solutions. 
	%%%%%%%%%%%%%%%%%%%%%%%%%%%%%%%%%%%%%
	\subsection{Collapse of the global-in-time solvability.}
	%%%%%%%%%%%%%%%%%%%%%%%%%%%%%%%%%%%%%
	In this subsection, we prove Theorem~\ref{Thm:4.1} which indicates the occurrence mechanism of the collapse of the global-in-time solvability. 
       As introduced in Section~1, let
	\[
	\mathcal{G}_{\alpha}:=\left\{ ~\! \nu \in \mathcal{M} ~\! ; ~\! 
         \text{\eqref{TFF} with } u(0) = \nu \geq 0 \ \text{ possesses a global-in-time solution.}
	\right\}.
	\]
	In other words, 
	\[
	\mathcal{G}_{\alpha}= \left\{ ~\! \nu\in \mathcal{M} ~\!; ~\!
	\nu \geq 0, \  T_{\alpha}[\nu]=\infty ~\! \right\}.
	\]
                        
	\begin{proof}[Proof of Theorem~\ref{Thm:4.1}] 
	Let $\alpha\in[1/2,1)$.
		The inclusion
		\[
		\mathcal{B}^+\left( C_1(1-\alpha)^{\frac{N}{2}} \right) \subset \mathcal{G}_{\alpha}
		\]
		is easily deduced from Theorem~\ref{Thm:1.2}. On the other hand, suppose that $\mu \in \mathcal{G}_\alpha$. Then, problem \eqref{TFF} possesses a global-in-time solution and
		\eqref{Thm:1.1.2} holds for all $T>1$.
		Letting $\sigma=T^{\alpha/4} (<T^{\alpha/2})$, we have
		\begin{equation*}
		\sup_{z\in{\bf R}^N} \mu(B(z;T^\frac{\alpha}{4})) \le \gamma'_1 \left(\int_{(16\sqrt{T})^{-1}}^{1/4} t^{-\alpha} \, dt \right)^{-\frac{N}{2}}.
		\end{equation*}
		Since $\limsup_{\alpha\to1-0} \gamma'_1(\alpha) \in (0,\infty)$,
		we have
		\[
		\overline{\gamma} := \sup_{\alpha\in[1/2,1)} \gamma'_1(\alpha) \in (0,\infty).
		\]
		Now taking $T\to\infty$, we obtain
		\begin{equation*}
		\mu({\bf R}^N) \le \gamma'_1\left(\int_{0}^{1/4} t^{-\alpha} \, dt \right)^{-\frac{N}{2}}= 4^{\alpha-1} \gamma'_1 (1-\alpha)^\frac{N}{2}\le \overline{\gamma}(1-\alpha)^\frac{N}{2}.
		\end{equation*}
		This implies that $\mu\in\mathcal{B}^+(C_2 (1-\alpha)^{N/2})
		$ with $C_2= \overline{\gamma}$.
		Then \eqref{Thm:4.1.1} follows.
	\end{proof}
	
	%%%%%%%%%%%%%%%%%%%%%%%%%%%%%%%%%%%%%%%%%%%%%%%%%%%%%
	\subsection{Collapse of the local-in-time solvability}
	%%%%%%%%%%%%%%%%%%%%%%%%%%%%%%%%%%%%%%%%%%%%%%%%%%%%%
	In this subsection, we prove Theorem~\ref{Thm:4.2}. As introduced in Section~1, for $0<\epsilon<N/2$, let
	\[
	f_\epsilon(x) := |x|^{-N}\left|\log|x|\right|^{-\frac{N}{2}-1+\epsilon}\chi_{B(0;e^{-{3/2}})}.
	\]
	Note that $f_{\epsilon}\in L^1({\bf R}^N)$ and $f_{\epsilon}$ is monotonically decreasing with respect to $|x|$. 
	It is observed that problem \eqref{Fjt} with $\mu = f_{\epsilon}$ possesses no local-in-time solutions. In contrast, the local-in-time solvability of problem \eqref{TFF} for any $L^1({\bf R}^N)$ initial data is guaranteed. The occurrence mechanism of this collapse of the local-in-time solvability can be explained by establishing the precise $\alpha$-dependence of the life span for $f_{\epsilon}$.
	
	\begin{proof}[Proof of Theorem~\ref{Thm:4.2}]		
		It follows that
		\begin{equation*}
		\sup_{z\in{\bf R}^N} \int_{B(z;\sigma)} f_{\epsilon}(x)\,dx = \int_{B(0;\sigma)} f_{\epsilon}(x)\,dx = C \left( \log \frac{1}{\sigma} \right)^{-\frac{N}{2}+\epsilon}
		\end{equation*}
		for all $\sigma\le e^{-{3/2}}$. 
		Firstly, we derive an upper estimate. Define 
		\begin{equation}
		\label{eq:overT}
		\overline{T_\alpha} [\kappa f_\epsilon] := \min\left\{ T_{\alpha}[\kappa f_{\epsilon}], e^{-\frac{3}{\alpha}}\right\} \le  T_{\alpha}[\kappa f_{\epsilon}].
		\end{equation}
		We can assume that problem \eqref{TFF} with $\mu=\kappa f_\epsilon$ possesses a solution in $[0,\overline{T_\alpha}[\kappa f_\epsilon])\times{\bf R}^N$.
		Since $(1-\alpha)^{{1/(1-\alpha)}}<1$, we can use Theorem~\ref{Thm:1.1} with $T=\overline{T_{\alpha}}[\kappa f_{\beta}]$ and
		\[
		\sigma = \overline{T_\alpha} [\kappa f_\epsilon]^\frac{\alpha}{2}  (1-\alpha)^{\frac{\alpha}{2(1-\alpha)}} \le \overline{T_{\alpha}}[\kappa f_{\epsilon}]^\frac{\alpha}{2} \le e^{-\frac{3}{2}}.
		\]
		Therefore, the necessary condition \eqref{Thm:1.1.2} provides
		\begin{equation*}
		\kappa \left( \log \frac{1}{\sigma} \right)^{-\frac{N}{2}+\epsilon}
		\le C \gamma_1' (1-\alpha)^{\frac{N}{2}} \left( \left( \frac{1}{4} \right)^{1-\alpha}-\left( \frac{1}{16} \right)^{1-\alpha}(1-\alpha) \right)^{-\frac{N}{2}}.
		\end{equation*}
		Note that if $\alpha$ approaches $1$, we can take $C>0$ independent of $\alpha$ such that
		\[
		\left( \left( \frac{1}{4} \right)^{1-\alpha}-\left( \frac{1}{16} \right)^{1-\alpha}(1-\alpha) \right)^{-\frac{N}{2}}<C.
		\]
		Therefore, combining with $\limsup_{\alpha\to1-0} \gamma'_1(\alpha)\in (0,\infty)$, we get
		\[
		\kappa \left( \log \frac{1}{\sigma} \right)^{-\frac{N}{2}+\epsilon} \le C(1-\alpha)^{\frac{N}{2}},
		\]
		where $C>0$ is independent of $\alpha$, near $\alpha =1$. This implies that
		\[
		\sigma \le \exp \left( -C \kappa^{\frac{2}{N-2\epsilon}} (1-\alpha)^{-\frac{N}{N-2\epsilon}} \right),
		\]
		that is 
		\begin{equation*}
		\begin{split}
		\overline{T_{\alpha}}[\kappa f_{\epsilon}]
		&\le (1-\alpha)^{-\frac{1}{1-\alpha}} \exp \left( -\frac{2}{\alpha}C \kappa^{\frac{2}{N-2\epsilon}} (1-\alpha)^{-\frac{N}{N-2\epsilon}} \right)\\
		&\le (1-\alpha)^{-\frac{1}{1-\alpha}} \exp \left( -C \kappa^{\frac{2}{N-2\epsilon}} (1-\alpha)^{-\frac{N}{N-2\epsilon}} \right)
		\end{split}
		\end{equation*}
		near $\alpha=1$. Here, we claim that
		\[
		(1-\alpha)^{-\frac{1}{1-\alpha}} \exp\left( -C \kappa^{\frac{2}{N-2\epsilon}} (1-\alpha)^{-\frac{N}{N-2\epsilon}} \right) \le \exp \left( -C' \kappa^{\frac{2}{N-2\epsilon}} (1-\alpha)^{-\frac{N}{N-2\epsilon}} \right)
		\]
		for appropriate $C'>0$. Indeed, it suffices to show that for any $K>0$ and $\beta>1$,
		\begin{equation}\label{eq. inequality}
		(1-\alpha)^{-\frac{1}{1-\alpha}} \exp \left( -K (1-\alpha)^{-\beta} \right)<1
		\end{equation}
		holds near $\alpha=1$, which is assured by Lemma~\ref{lem. inequality} below. 
		Therefore, it follows from \eqref{eq:overT} that
		\[
		\overline{T_\alpha} [\kappa f_\epsilon] 
		 \le \exp \left( -C \kappa^{\frac{2}{N-2\epsilon}} (1-\alpha)^{-\frac{N}{N-2\epsilon}} \right)
		\]
		near $\alpha =1$.
		Since the right-hand side of the above inequality tends to $0$ as $\alpha \to 1-0$,
		there exists  $\alpha(\kappa,\epsilon)\in (0,1)$ such that all the above calculations are justified and
		\[
		e^{-\frac{3}{\alpha}} \ge \exp \left( -C \kappa^{\frac{2}{N-2\epsilon}} (1-\alpha)^{-\frac{N}{N-2\epsilon}} \right)
		\]
		holds for $\alpha(\kappa,\epsilon ) <\alpha <1$.
		This implies that
		\[
		T_{\alpha}[\kappa f_{\epsilon}] \le \exp \left( -C \kappa^{\frac{2}{N-2\epsilon}} (1-\alpha)^{-\frac{N}{N-2\epsilon}} \right)
		\]
		for $\alpha(\kappa,\epsilon ) <\alpha <1$,
		and the upper estimate of $T_{\alpha}[\kappa f_{\epsilon}] $ has been obtained.
		
		Let us deduce the lower estimate. 
		Define
		\[
		\hat{T}_\alpha := \exp \left( -C \kappa^\frac{2}{N-2\epsilon}(1-\alpha)^{-\frac{N}{N-2\epsilon}} \right)
		\]
		for appropriate $C>0$.
		It suffices to show that problem \eqref{TFF} possesses a solution in $[0,\hat{T}_\alpha)\times{\bf R}^N$.
		Since $\lim_{\alpha\to1-0} \hat{T}_\alpha=0$, we  may assume that $\alpha$ is close enough to $1$ so that $\hat{T}_\alpha<e^{-3/2}$.
		We have
		\begin{equation*}
		\begin{split}
		\sup_{z\in{\bf R}^N} \int_{B(z;\hat{T}_\alpha^{\alpha/2})}\kappa f_\epsilon(x)\,dx
		&= \int_{B(0;\hat{T}_\alpha^{\alpha/2})}\kappa f_\epsilon(x)\,dx\\
		& = \kappa \left( \log \frac{1}{\hat{T}_\alpha^{{\alpha/2}}} \right)^{-\frac{N}{2}+\epsilon}\\
		& = C (1-\alpha)^\frac{N}{2} \to 0\quad \mbox{as} \quad \alpha\to1-0.
		\end{split}
		\end{equation*}
		This implies that \eqref{Thm:1.2.1} holds near $\alpha=1$ and guarantees the solvability on $[0,\hat{T}_\alpha)$.
		In particular, $\hat{T}_\alpha \le T_{\alpha}[\kappa f_{\epsilon}]$ is verified
		and  the lower estimate of $T_{\alpha}[\kappa f_{\epsilon}] $ has been obtained.
	\end{proof}

%%%%%%%%%%%%%%%%%%%%%%%%%%%%%%%%%%%%%%%%%%%%%%%%%%%%%%%%%%%%%%%%%%%%%%%%%%%%%%%%%%%%%%%%%%%%%%%%%%%
%%%%%%%%% Lemma 4.1     %%%%%%%%%%%%%%%%%%%%%%%%%%%%%%%%%%%%%%%%%%%%%%%%%%%%%%%%%%%%%%%%%%%%%%%%%%%
%%%%%%%%%%%%%%%%%%%%%%%%%%%%%%%%%%%%%%%%%%%%%%%%%%%%%%%%%%%%%%%%%%%%%%%%%%%%%%%%%%%%%%%%%%%%%%%%%%%	
	\begin{Lemma}\label{lem. inequality}
		Inequality \eqref{eq. inequality} holds true. More precisely, for all $K>0$ and $\beta>1$,
		\[
		\tau^{-\frac{1}{\tau}} \exp\left( -K \tau^{-\beta} \right)<1
		\]
		for sufficiently small $\tau>0$. 
	\end{Lemma}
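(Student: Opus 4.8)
The plan is to take logarithms and reduce the claim to the standard fact that polynomial decay dominates logarithmic growth. Since both factors $x^{-\frac1x}$ and $\exp(-Kx^{-\beta})$ are strictly positive for $x\in(0,1)$, the asserted inequality is equivalent to the negativity of the logarithm of their product. First I would record, for $0<x<1$,
\[
\log\Bigl( x^{-\frac1x}\exp(-Kx^{-\beta})\Bigr) = -\frac{\log x}{x} - K x^{-\beta},
\]
so that it suffices to prove this quantity is negative for all sufficiently small $x>0$.

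Next I would clear the singular powers by multiplying through by $x^{\beta}>0$; the inequality to be established then takes the clean form
\[
-x^{\beta-1}\log x < K.
\]
Because $\beta>1$ gives $\beta-1>0$, the left-hand side is of the shape $x^{a}|\log x|$ with $a:=\beta-1>0$, and the classical limit $\lim_{x\to0+} x^{a}|\log x| = 0$ guarantees that it tends to $0$ as $x\to0+$. Hence there exists $\delta>0$ such that $-x^{\beta-1}\log x<K$ for all $0<x<\delta$, which is precisely the desired inequality after dividing back by $x^{\beta}$ and exponentiating.

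I expect no genuine obstacle here: the only ingredient beyond elementary algebra is the limit $x^{a}|\log x|\to0$, which one can justify instantly by the substitution $x=e^{-y}$ (turning the expression into $ye^{-ay}\to0$ as $y\to+\infty$) or by a single application of L'H\^opital's rule. The only care required is bookkeeping of signs upon multiplication by $x^{\beta}$, together with the observation that the reduction to the logarithmic form uses the positivity of both factors, which holds throughout $(0,1)$.
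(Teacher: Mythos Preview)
Your proof is correct and follows essentially the same approach as the paper: both reduce the inequality, via a monotone transformation, to the elementary fact that a negative power of $x$ is dominated by an exponential (equivalently, that $x^{a}|\log x|\to 0$ for $a>0$). The paper raises both sides to the $x$-th power and observes $x^{-1}\exp(-Kx^{1-\beta})\to 0$, whereas you take logarithms and multiply by $x^{\beta}$; these are the same computation in different clothing.
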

%%%%%%%%%%%%%%%%%%%%%%%%%%%%%%%%%%%%%%%%%%%%%%%%%%%%%%%%%%%%%%%%%%%%%%%%%%%%%%%%%%%%%%%%%%%%%%%%%%%
	\begin{proof}
		The conclusion is equivalent to 
		\[
		\tau^{-1}\exp\left(- K \tau^{1-\beta} \right) < 1
		\]
		near $\tau=0$ and it is true since $ \tau^{-1}\exp\left(- K \tau^{1-\beta}\right) \to 0$ as $\tau\to 0$.
	\end{proof}
	
	\begin{Remark}
		{\rm 
		In Theorem~\ref{Thm:4.2}, the lower estimate is true for all $0<\alpha<1$, while the upper estimate does not hold for any $0<\alpha<1$ in general. Indeed, for each $\alpha$, if $\kappa$ is sufficiently small, then $T_{\alpha}[\kappa f_{\epsilon}]=\infty$ (See Theorem~\ref{Thm:4.1}). However, no matter how small $\kappa$ is, $T_{\alpha}[\kappa f_{\epsilon}]$ becomes finite if $\alpha$ approaches $1$ (collapse of the \textit{global-in-time} solvability), and $\alpha \to 1-0$ yields $T_{\alpha}[\kappa f_{\epsilon}]\to 0$ (collapse of the \textit{local-in-time} solvability).
	}
	\end{Remark}

%%%%%%%%%%%%%%%%%%%%%%%%%%%%%%%%%%%%%
\section*{Conclusion and Acknowledgments.}
%%%%%%%%%%%%%%%%%%%%%%%%%%%%%%%%%%%%%
In the present paper, we aim to clarify the occurrence mechanism of
the collapse of the global and local-in-time solvability of \eqref{TFF} with $p=p_F$ when $\alpha$ approaches to $1-0$. To carry out this program, we derive the necessary and sufficient conditions for the solvability of \eqref{TFF} in a precise way, and apply these results to reveal the collapse mechanisms. Through this procedure, we conclude that the collapse of the global-in-time solvability results from the shrinkage of $\mathcal{G}_{\alpha}$, the set of initial data admitting global-in-time solutions, towards $\{0\}$ as $\alpha \to 1$. On the other hand, the collapse of the local-in-time solvability of \eqref{TFF} for the special family $\{f_{\epsilon}\}$ of initial data is caused by the decay of the life span $T_{\alpha}\left[ f_{\epsilon} \right]$ of the solution $u(t)$ of \eqref{TFF} with $u(0)= f_{\epsilon}$ towards zero as $\alpha \to 1$.

Finally, the authors of this paper would like to thank the referees for carefully reading the manuscript and relevant remarks. The authors also wishes to thank Professor Mitsuharu \^Otani for his helpful advice to the preparation of the paper.
	
	%%%%%%%%%%%%%%%%%%%%%%%%%%%%%%%%%%%%%%
	%%%%%%%%%%%%    references    %%%%%%%%%%%%%%%%%%
	%%%%%%%%%%%%%%%%%%%%%%%%%%%%%%%%%%%%%%
	\bibliographystyle{plain}
	\bibliography{ref}

@book {Podl99,
	AUTHOR = {Podlubny, Igor},
	TITLE = {Fractional differential equations},
	SERIES = {Mathematics in Science and Engineering},
	VOLUME = {198},
	NOTE = {An introduction to fractional derivatives, fractional
	differential equations, to methods of their solution and some
	of their applications},
	PUBLISHER = {Academic Press, Inc., San Diego, CA},
	YEAR = {1999},
	PAGES = {xxiv+340},
	ISBN = {0-12-558840-2},
	MRCLASS = {26A33 (34K05)},
	MRNUMBER = {1658022},
	MRREVIEWER = {Anatoly Kilbas},
}

@article {BreCaz96,
	AUTHOR = {Brezis, Ha\"{\i}m and Cazenave, Thierry},
	TITLE = {A nonlinear heat equation with singular initial data},
	JOURNAL = {J. Anal. Math.},
	FJOURNAL = {Journal d'Analyse Math\'{e}matique},
	VOLUME = {68},
	YEAR = {1996},
	PAGES = {277--304},
	ISSN = {0021-7670},
	MRCLASS = {35K55 (35R05)},
	MRNUMBER = {1403259},
	MRREVIEWER = {Alan V. Lair},
	DOI = {10.1007/BF02790212},
	URL = {https://doi.org/10.1007/BF02790212},
}

@article {ZhanSun15,
	AUTHOR = {Zhang, Quan-Guo and Sun, Hong-Rui},
	TITLE = {The blow-up and global existence of solutions of {C}auchy problems for a time fractional diffusion equation},
	JOURNAL = {Topol. Methods Nonlinear Anal.},
	FJOURNAL = {Topological Methods in Nonlinear Analysis},
	VOLUME = {46},
	YEAR = {2015},
	NUMBER = {1},
	PAGES = {69--92},
	ISSN = {1230-3429},
	MRCLASS = {35R11 (26A33 35A01 35B44)},
	MRNUMBER = {3443679},
	DOI = {10.12775/TMNA.2015.038},
	URL = {https://doi.org/10.12775/TMNA.2015.038},
}

@article {ZLS19,
	AUTHOR = {Zhang, Quanguo and Li, Yaning and Su, Menglong},
	TITLE = {The local and global existence of solutions for a time fractional complex {G}inzburg-{L}andau equation},
	JOURNAL = {J. Math. Anal. Appl.},
	FJOURNAL = {Journal of Mathematical Analysis and Applications},
	VOLUME = {469},
	YEAR = {2019},
	NUMBER = {1},
	PAGES = {16--43},
	ISSN = {0022-247X},
	MRCLASS = {35R11 (35Q56)},
	MRNUMBER = {3857509},
	DOI = {10.1016/j.jmaa.2018.08.008},
	URL = {https://doi.org/10.1016/j.jmaa.2018.08.008},
}

@article {Miya21,
	AUTHOR = {Miyamoto, Yasuhito},
	TITLE = {A doubly critical semilinear heat equation in the {$L^1$}
	space},
	JOURNAL = {J. Evol. Equ.},
	FJOURNAL = {Journal of Evolution Equations},
	VOLUME = {21},
	YEAR = {2021},
	NUMBER = {1},
	PAGES = {151--166},
	ISSN = {1424-3199},
	MRCLASS = {35K55 (35A01 46E30)},
	MRNUMBER = {4238200},
	MRREVIEWER = {Mohammed El A\"{\i}di},
	DOI = {10.1007/s00028-020-00573-2},
	URL = {https://doi.org/10.1007/s00028-020-00573-2},
}

@article {HisaIshige18,
	AUTHOR = {Hisa, Kotaro and Ishige, Kazuhiro},
	TITLE = {Existence of solutions for a fractional semilinear parabolic
	equation with singular initial data},
	JOURNAL = {Nonlinear Anal.},
	FJOURNAL = {Nonlinear Analysis. Theory, Methods \& Applications. An
	International Multidisciplinary Journal},
	VOLUME = {175},
	YEAR = {2018},
	PAGES = {108--132},
	ISSN = {0362-546X},
	MRCLASS = {35R11 (35A01)},
	MRNUMBER = {3830724},
	DOI = {10.1016/j.na.2018.05.011},
	URL = {https://doi.org/10.1016/j.na.2018.05.011},
}

@article {BarasPierre85,
	AUTHOR = {Baras, Pierre and Pierre, Michel},
	TITLE = {Crit\`ere d'existence de solutions positives pour des \'{e}quations
	semi-lin\'{e}aires non monotones},
	JOURNAL = {Ann. Inst. H. Poincar\'{e} Anal. Non Lin\'{e}aire},
	FJOURNAL = {Annales de l'Institut Henri Poincar\'{e}. Analyse Non Lin\'{e}aire},
	VOLUME = {2},
	YEAR = {1985},
	NUMBER = {3},
	PAGES = {185--212},
	ISSN = {0294-1449},
	MRCLASS = {45N05 (47H15)},
	MRNUMBER = {797270},
	MRREVIEWER = {Maurice Gaultier},
	URL = {http://www.numdam.org/item?id=AIHPC_1985__2_3_185_0},
}

@article {FujiIoku18,
	AUTHOR = {Fujishima, Yohei and Ioku, Norisuke},
	TITLE = {Existence and nonexistence of solutions for the heat equation
	with a superlinear source term},
	JOURNAL = {J. Math. Pures Appl. (9)},
	FJOURNAL = {Journal de Math\'{e}matiques Pures et Appliqu\'{e}es. Neuvi\`eme S\'{e}rie},
	VOLUME = {118},
	YEAR = {2018},
	PAGES = {128--158},
	ISSN = {0021-7824},
	MRCLASS = {35K91 (35A01 35B40 46E30)},
	MRNUMBER = {3852471},
	MRREVIEWER = {Hongwei Chen},
	DOI = {10.1016/j.matpur.2018.08.001},
	URL = {https://doi.org/10.1016/j.matpur.2018.08.001},
}

@incollection {Takahashi16,
	AUTHOR = {Takahashi, Jin},
	TITLE = {Solvability of a semilinear parabolic equation with measures
	as initial data},
	BOOKTITLE = {Geometric properties for parabolic and elliptic {PDE}'s},
	SERIES = {Springer Proc. Math. Stat.},
	VOLUME = {176},
	PAGES = {257--276},
	PUBLISHER = {Springer, [Cham]},
	YEAR = {2016},
	MRCLASS = {35K91 (35A01 35K58 35R06)},
	MRNUMBER = {3571832},
	DOI = {10.1007/978-3-319-41538-3\_15},
	URL = {https://doi.org/10.1007/978-3-319-41538-3_15},
}

@article {Fujita66,
	AUTHOR = {Fujita, Hiroshi},
	TITLE = {On the blowing up of solutions of the {C}auchy problem for
	{$u_{t}=\Delta u+u^{1+\alpha }$}},
	JOURNAL = {J. Fac. Sci. Univ. Tokyo Sect. I},
	FJOURNAL = {Journal of the Faculty of Science. University of Tokyo.
	Section I},
	VOLUME = {13},
	YEAR = {1966},
	PAGES = {109--124 (1966)},
	ISSN = {0368-2269},
	MRCLASS = {35.37},
	MRNUMBER = {214914},
	MRREVIEWER = {K. Rektorys},
}

@book {QuitSoup19,
	Author = {Quittner, Pavol and Souplet, Philippe},
	Title = {Superlinear parabolic problems. {Blow}-up, global existence and steady states},
	Edition = {2nd revised and updated edition},
	FSeries = {Birkh{\"a}user Advanced Texts. Basler Lehrb{\"u}cher},
	Series = {Birkh{\"a}user Adv. Texts, Basler Lehrb{\"u}ch.},
	ISSN = {1019-6242},
	ISBN = {978-3-030-18220-5; 978-3-030-18222-9},
	Year = {2019},
	Publisher = {Cham: Birkh{\"a}user},
	DOI = {10.1007/978-3-030-18222-9},
}

@article {FHIL23,
	Author = {Fujishima, Yohei and Hisa, Kotaro and Ishige, Kazuhiro and Laister, Robert},
	Title = {Solvability of superlinear fractional parabolic equations},
	FJournal = {Journal of Evolution Equations},
	Journal = {J. Evol. Equ.},
	ISSN = {1424-3199},
	Volume = {23},
	Number = {1},
	Pages = {38},
	Note = {Id/No 4},
	Year = {2023},
	DOI = {10.1007/s00028-022-00853-z},
}

@article {GMS22,
	author={Ghergu, Marius and Miyamoto, Yasuhito and Suzuki, Masamitsu},
	title={Solvability for time-fractional semilinear parabolic equations with singular initial data},
	journal={Math. Meth. Appl. Sci.},
	volume={46},
	date={2022},
	pages={6686--6704},
	issn={0362-546X},
	review={\MR{3830724}},
	doi={10.1016/j.na.2018.05.011},
}

@article {GM89,
	Author = {Giga, Yoshikazu and Miyakawa, Tetsuro},
	Title = {Navier-{Stokes} flow in {{\(R^ 3\)}} with measures as initial vorticity and {Morrey} spaces},
	FJournal = {Communications in Partial Differential Equations},
	Journal = {Commun. Partial Differ. Equations},
	ISSN = {0360-5302},
	Volume = {14},
	Number = {5},
	Pages = {577--618},
	Year = {1989},
	DOI = {10.1080/03605308908820621},
}

@article {HIT18,
 AUTHOR = {Hisa, Kotaro and Ishige, Kazuhiro and Takahashi, Jin},
TITLE = {Initial traces and solvability for a semilinear heat equation
on a half space of {$\Bbb R^N$}},
JOURNAL = {Trans. Amer. Math. Soc.},
FJOURNAL = {Transactions of the American Mathematical Society},
VOLUME = {376},
YEAR = {2023},
NUMBER = {8},
PAGES = {5731--5773},
ISSN = {0002-9947,1088-6850},
MRCLASS = {35K58 (35A01 35A21 35K20)},
MRNUMBER = {4630758},
DOI = {10.1090/tran/8922},
}

@article {Kato92,
	Author = {Kato, Tosio},
	Title = {Strong solutions of the {Navier}-{Stokes} equation in {Morrey} spaces},
	FJournal = {Boletim da Sociedade Brasileira de Matem{\'a}tica. Nova S{\'e}rie},
	Journal = {Bol. Soc. Bras. Mat., Nova S{\'e}r.},
	ISSN = {0100-3569},
	Volume = {22},
	Number = {2},
	Pages = {127--155},
	Year = {1992},
	DOI = {10.1007/BF01232939},
}

@article {KozoYama94,	
	Author = {Kozono, Hideo and Yamazaki, Masao},
	Title = {Semilinear heat equations and the {Navier}-{Stokes} equation with distributions in new function spaces as initial data},
	FJournal = {Communications in Partial Differential Equations},
	Journal = {Commun. Partial Differ. Equations},
	ISSN = {0360-5302},
	Volume = {19},
	Number = {5-6},
	Pages = {959--1014},
	Year = {1994},
	DOI = {10.1080/03605309408821042},
}

@article {LaiSie21,
	Author = {Laister, Robert and Sier{\.z}{\k{e}}ga, Miko{\l}aj},
	Title = {A blow-up dichotomy for semilinear fractional heat equations},
	FJournal = {Mathematische Annalen},
	Journal = {Math. Ann.},
	ISSN = {0025-5831},
	Volume = {381},
	Number = {1-2},
	Pages = {75--90},
	Year = {2021},
	DOI = {10.1007/s00208-020-02078-2},
}

@article {MaeTera06,
	Author = {Maekawa, Yasunori and Terasawa, Yutaka},
	Title = {The {Navier}-{Stokes} equations with initial data in uniformly local {{\(L^p\)}} spaces.},
	FJournal = {Differential and Integral Equations},
	Journal = {Differ. Integral Equ.},
	ISSN = {0893-4983},
	Volume = {19},
	Number = {4},
	Pages = {369--400},
	Year = {2006},
}

@article{OkaZhan23,
	title={Existence of solutions for time fractional semilinear parabolic equations in Besov--Morrey spaces}, 
	author={Yusuke Oka and Erbol Zhanpeisov},
	year={2023},
	eprint={2305.05969},
	archivePrefix={arXiv},
	primaryClass={math.AP}
}

@book {GalWar20,
	AUTHOR = {Gal, Ciprian G. and Warma, Mahamadi},
	TITLE = {Fractional-in-time semilinear parabolic equations and
	applications},
	SERIES = {Math\'{e}matiques \& Applications (Berlin) [Mathematics \&
	Applications]},
	VOLUME = {84},
	PUBLISHER = {Springer, Cham},
	YEAR = {[2020] \copyright 2020},
	PAGES = {xii+184},
	ISBN = {978-3-030-45043-4; 978-3-030-45042-7},
	MRCLASS = {35-02 (35K58 35R11)},
	MRNUMBER = {4167508},
	DOI = {10.1007/978-3-030-45043-4},
	URL = {https://doi.org/10.1007/978-3-030-45043-4},
}

@incollection {LuchYama19,
	AUTHOR = {Luchko, Yuri and Yamamoto, Masahiro},
	TITLE = {Maximum principle for the time-fractional {PDE}s},
	BOOKTITLE = {Handbook of fractional calculus with applications. {V}ol. 2},
	PAGES = {299--325},
	PUBLISHER = {De Gruyter, Berlin},
	YEAR = {2019},
	MRCLASS = {35R11 (26A33 35B50 45K05 60G22)},
	MRNUMBER = {3965399},
}

@article {Hayakawa73,
	AUTHOR = {Hayakawa, Kantaro},
	TITLE = {On nonexistence of global solutions of some semilinear
	parabolic differential equations},
	JOURNAL = {Proc. Japan Acad.},
	FJOURNAL = {Proceedings of the Japan Academy},
	VOLUME = {49},
	YEAR = {1973},
	PAGES = {503--505},
	ISSN = {0021-4280},
	MRCLASS = {35K60},
	MRNUMBER = {338569},
}

@article {KoSiTa77,
	AUTHOR = {Kobayashi, Kusuo and Sirao, Tunekiti and Tanaka, Hiroshi},
	TITLE = {On the growing up problem for semilinear heat equations},
	JOURNAL = {J. Math. Soc. Japan},
	FJOURNAL = {Journal of the Mathematical Society of Japan},
	VOLUME = {29},
	YEAR = {1977},
	NUMBER = {3},
	PAGES = {407--424},
	ISSN = {0025-5645,1881-1167},
	MRCLASS = {35K55},
	MRNUMBER = {450783},
	MRREVIEWER = {Joel\ Smoller},
	DOI = {10.2969/jmsj/02930407},
	URL = {https://doi.org/10.2969/jmsj/02930407},
}

@article {Sugitani75,
	AUTHOR = {Sugitani, Sadao},
	TITLE = {On nonexistence of global solutions for some nonlinear
	integral equations},
	JOURNAL = {Osaka Math. J.},
	FJOURNAL = {Osaka Mathematical Journal},
	VOLUME = {12},
	YEAR = {1975},
	PAGES = {45--51},
	ISSN = {0388-0699},
	MRCLASS = {35K55 (45G99)},
	MRNUMBER = {470493},
	URL = {http://projecteuclid.org/euclid.ojm/1200757727},
}

@article {ARCD2004,
	AUTHOR = {Arrieta, Jose M. and Rodriguez-Bernal, Anibal and Cholewa, Jan
	W. and Dlotko, Tomasz},
	TITLE = {Linear parabolic equations in locally uniform spaces},
	JOURNAL = {Math. Models Methods Appl. Sci.},
	FJOURNAL = {Mathematical Models and Methods in Applied Sciences},
	VOLUME = {14},
	YEAR = {2004},
	NUMBER = {2},
	PAGES = {253--293},
	ISSN = {0218-2025,1793-6314},
	MRCLASS = {35K25 (35B45 35K05)},
	MRNUMBER = {2040897},
	MRREVIEWER = {Lubomira\ G.\ Softova},
	DOI = {10.1142/S0218202504003234},
	URL = {https://doi.org/10.1142/S0218202504003234},
}

@article {IshiKawaOka2020,
	AUTHOR = {Ishige, Kazuhiro and Kawakami, Tatsuki and Okabe, Shinya},
	TITLE = {Existence of solutions for a higher-order semilinear parabolic
	equation with singular initial data},
	JOURNAL = {Ann. Inst. H. Poincar\'{e} C Anal. Non Lin\'{e}aire},
	FJOURNAL = {Annales de l'Institut Henri Poincar\'{e} C. Analyse Non
	Lin\'{e}aire},
	VOLUME = {37},
	YEAR = {2020},
	NUMBER = {5},
	PAGES = {1185--1209},
	ISSN = {0294-1449,1873-1430},
	MRCLASS = {35K30 (35A01 35K91)},
	MRNUMBER = {4138231},
	MRREVIEWER = {Hans\ Engler},
	DOI = {10.1016/j.anihpc.2020.04.002},
	URL = {https://doi.org/10.1016/j.anihpc.2020.04.002},
}

@article {IshiKawaSier2016,
    AUTHOR = {Ishige, Kazuhiro and Kawakami, Tatsuki and Sier\.{z}\polhk{e}ga, Miko{\l}aj},
     TITLE = {Supersolutions for a class of nonlinear parabolic systems},
   JOURNAL = {J. Differential Equations},
  FJOURNAL = {Journal of Differential Equations},
    VOLUME = {260},
      YEAR = {2016},
    NUMBER = {7},
     PAGES = {6084--6107},
      ISSN = {0022-0396,1090-2732},
   MRCLASS = {35K51 (35A01 35B44 35K91)},
  MRNUMBER = {3456827},
MRREVIEWER = {Hongwei\ Chen},
       DOI = {10.1016/j.jde.2015.12.031},
       URL = {https://doi.org/10.1016/j.jde.2015.12.031},
}

@article {RobSier2013,
	AUTHOR = {Robinson, James C. and Sier\.{z}\polhk{e}ga, Miko{\l}aj},
	TITLE = {Supersolutions for a class of semilinear heat equations},
	JOURNAL = {Rev. Mat. Complut.},
	FJOURNAL = {Revista Matem\'{a}tica Complutense},
	VOLUME = {26},
	YEAR = {2013},
	NUMBER = {2},
	PAGES = {341--360},
	ISSN = {1139-1138,1988-2807},
	MRCLASS = {35K91 (35A01 35B65 35K08)},
	MRNUMBER = {3068603},
	DOI = {10.1007/s13163-012-0108-9},
	URL = {https://doi.org/10.1007/s13163-012-0108-9},
}

@incollection {Mainardi1994,
	AUTHOR = {Mainardi, Francesco},
	TITLE = {On the initial value problem for the fractional diffusion-wave
	equation},
	BOOKTITLE = {Waves and stability in continuous media ({B}ologna, 1993)},
	SERIES = {Ser. Adv. Math. Appl. Sci.},
	VOLUME = {23},
	PAGES = {246--251},
	PUBLISHER = {World Sci. Publ., River Edge, NJ},
	YEAR = {1994},
	ISBN = {981-02-1878-8},
	MRCLASS = {35K99},
	MRNUMBER = {1320083},
}

@article{CarFerNoe24,
	title={A semilinear problem associated to the space-time fractional heat equation in $\mathbb{R}^N$}, 
	author={Carmen Cort\'{a}zar and Fernando Quir\'{o}s and Noem\'{i} Wolanski},
	year={2024},
	eprint={2405.18612},
	archivePrefix={arXiv},
	primaryClass={math.AP},
	url={https://arxiv.org/abs/2405.18612}, 
}

\end{document}